\theoremstyle{plain}
\newtheorem{thm}{Theorem}[section]
\newtheorem{prop}[thm]{Proposition}
\newtheorem{lemm}[thm]{Lemma}
\newtheorem{cor}[thm]{Corollary}
\theoremstyle{definition}
\newtheorem{defn}[thm]{Definition}
\newtheorem{rmk}[thm]{Remark}
\newtheorem{con}[thm]{Conjecture}
\newcommand{\cat}[1]{\text{\rm cat}\left(#1 \right)}
\title{Surgery Approach to Rudyak's Conjecture}
\author{Alexander Dranishnikov and Jamie Scott}
\begin{document}
\maketitle
\begin{abstract}
Using the surgery we prove the following:  
\begin{thm}
Let $f:M \to N$ be a normal map of degree one between closed smooth manifolds 
with $N$ being $(r-1)$-connected, $r\ge 1$. If $N$ satisfies the inequality
$\dim N \leq 2r \cat N - 3$, 
then {\rm $\cat M \geq \cat N$}.
\end{thm}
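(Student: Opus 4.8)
The plan is to detect $\cat N$ by a single cohomology class on $N$ that is manifestly carried along by the normal map. Write $k=\cat N$. For an $(r-1)$-connected complex $X$ let $\beta_k(X)\in H^{kr}\bigl(X;\,H_r(X)^{\otimes k}\bigr)$ be the primary obstruction to compressing the diagonal $\Delta_X\colon X\to X^{k}$ into the $k$-fold fat wedge $T^{k-1}(X)$ — equivalently the Euler class of the $k$-fold fibrewise join of the path fibration $P_0X\to X$, whose fibre is $(\Omega X)^{*k}$. (For $r=1$ the coefficient module is the $k$-th tensor power of the augmentation ideal $I[\pi_1X]$ and $\beta_k(X)$ is the $k$-th power of the Berstein–Schwarz class.) I would use two properties of this class: it is natural, $g^{*}\beta_k(Y)=\bigl((g_{*})^{\otimes k}\bigr)_{*}\beta_k(X)$ for $g\colon X\to Y$, with coefficient map $(g_{*})^{\otimes k}\colon H_r(X)^{\otimes k}\to H_r(Y)^{\otimes k}$; and $\beta_k(X)\neq 0$ implies $\cat X\ge k$ with no further hypothesis.

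The first step is the metastable-range computation of category. Since $N$ is $(r-1)$-connected, the fibre $(\Omega N)^{*k}$ is $(rk-2)$-connected, and the hypothesis $\dim N\le 2r\cat N-3 = 2(rk-2)+1$ is precisely the statement that the section problem for the $k$-th fibrewise join over $N$ lies in the metastable range of this fibre. There the primary obstruction is a complete obstruction, so $\cat N\ge k$ forces $\beta_k(N)\neq 0$; equivalently $\cat X=\max\{k:\beta_k(X)\neq 0\}$ in this range. This identification is the technical core, it is where the dimension bound is consumed, and for a Poincaré complex such as $N$ it is presumably also where the surgery input does its work (controlling the secondary obstructions once the Euler class vanishes).

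The second step transports $\beta_k$ across $f$. Since $f$ is a degree-one \emph{normal} map it may be made highly connected by surgery below the middle dimension, and its bundle data give the Umkehr $f^{!}=D_M\circ f^{*}\circ D_N^{-1}$ with $f_{*}f^{!}=\operatorname{id}$ on homology with arbitrary local coefficients (the orientation systems matching via $w_M=f^{*}w_N$). Hence $f^{*}\colon H^{*}(N;\mathcal L)\to H^{*}(M;f^{*}\mathcal L)$ is a split monomorphism for every $\pi_1N$-module $\mathcal L$. Taking $\mathcal L=H_r(N)^{\otimes k}$ and invoking naturality, $f^{*}\beta_k(N)=\bigl((f_{*})^{\otimes k}\bigr)_{*}\beta_k(M)$; as $\beta_k(N)\neq 0$ and $f^{*}$ is injective the left side is nonzero, whence $\beta_k(M)\neq 0$ and so $\cat M\ge k=\cat N$. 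It is essential that this identity is run with $f$ itself (producing $\beta_k(M)$, whose nonvanishing bounds $\cat M$ with no metastable assumption on $M$) rather than with a surgered model of $M$.

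The main obstacle is the metastable identification $\cat X=\max\{k:\beta_k(X)\neq 0\}$ when $\dim X\le 2r\cat X-3$ — i.e.\ showing that vanishing of the Euler class of the $k$-fold fibrewise join already yields a section, by exploiting that $(\Omega X)^{*k}$ sits in its metastable range relative to $\dim X$. Everything else (Poincaré duality for the degree-one map, naturality of $\beta_k$) is soft, so the weight of the argument rests on establishing this lemma for $N$.
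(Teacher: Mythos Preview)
Your approach is fundamentally different from the paper's and the central lemma you isolate is false. Take $N=S^2\times S^3$ with $r=2$ and $k=\cat N=2$: the hypothesis $\dim N=5\le 2rk-3=5$ holds, yet your class $\beta_2(N)$ lives in $H^{4}\bigl(N;H_2(N)^{\otimes 2}\bigr)=H^4(S^2\times S^3;\mathbb Z)=0$, so $\beta_2(N)=0$ while $\cat N=2$. (For a strict-inequality example, $N=S^2\times S^2\times S^3$ has $r=2$, $k=3$, $\dim N=7\le 9$, and $H^6(N)=0$.) The reason is structural: the primary obstruction to sectioning $p^N_{k-1}$ sits in degree $rk$ and sees only $k$-fold products of classes of degree exactly $r$, whereas $\cat N$ here is forced by a mixed-degree cup product landing above $rk$. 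So the implication ``$\cat N=k$ and $\dim N\le 2rk-3$ $\Rightarrow$ $\beta_k(N)\ne 0$'' fails, and the rest of your argument (injectivity of $f^*$, category weight of $f^*\beta_k(N)$) never gets off the ground. Two further symptoms: your naturality identity $f^*\beta_k(N)=((f_*)^{\otimes k})_*\beta_k(M)$ does not typecheck when $M$ is not $(r-1)$-connected, and your argument never uses normality beyond the degree-one Umkehr splitting, so if it worked it would prove the full conjecture in this range.

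The paper does not try to manufacture a detecting class on $N$. It first replaces $f$ by the connected sum $f\#\bar f:M\#\bar M\to N\#\bar N$, which is still normal, has surgery obstruction $\theta(f)+\theta(\bar f)=0$, and leaves $\cat{}$ unchanged on both sides by the connected-sum formula. Assuming for contradiction $\cat M\le q<\cat N=q+1$, one has a lift $\lambda:M\to G_q(N)$ of $f$; since $\theta=0$ there is a normal bordism $F:W\to N$ from $f$ to a homotopy equivalence $f':M'\to N$, and $W$ is obtained from $M$ by attaching cells of dimension $\le\lfloor n/2\rfloor+1$. The fibre $(\Omega N)^{*(q+1)}$ is $(r(q+1)-2)$-connected, and the inequality $n\le 2r(q+1)-3$ is exactly what guarantees each such cell attaches below that connectivity, so $\lambda$ extends over $W$. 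Restricting to $M'$ and composing with a homotopy inverse of $f'$ yields a section of $p^N_q$, contradicting $\cat N>q$. Here normality is used essentially: it is what produces the surgeries and bounds the handle dimensions by $n/2$.
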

\end{abstract}

\section{Introduction}
\begin{defn}
The (reduced) Lusternik-Schnirelmann category, $\cat X$, of an ANR space $X$ is the minimal number $k$ such that $X$ admits an open cover by $k+1$ sets $U_0,U_1,\dots,U_k$ such that each $U_i$ is contractible in $X$.
\end{defn}
The Lusternik-Schnirelmann category is an important invariant, since it gives a lower bound on the number of critical points for a smooth real-valued function on a closed manifold~\cite{LS}. In this paper we restrict ourselves to smooth (or PL) manifolds, in particular, to the following conjecture~\cite{Ru1}:
\begin{con}[Rudyak's Conjecture]
If $f:M\to N$ is a degree one map between closed manifolds, then
$\cat M\ge\cat N$.
\end{con}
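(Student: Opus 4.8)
The plan is to extend the surgery argument behind the Theorem to the unrestricted conjecture, so I will set up a minimal counterexample, surger it as far as possible, and then identify the obstruction that currently blocks the last step. Suppose the conjecture fails; first I would deal with the case of a degree one \emph{normal} map $f\colon M\to N$ (the non-normal case --- modelled by the collapse $M\to S^{\dim M}$, itself harmless since $\cat{S^{n}}\le\cat{M}$ --- would need a separate, purely cohomological treatment, and I regard it as the lesser difficulty). Among all closed manifolds admitting a degree one normal map to a fixed $N$ whose category is below $\cat{N}$, choose $M$ with minimal category. Surgery below the middle dimension on such a normal map is always available, so provided one can show that it does not raise the category of the domain --- a monotonicity lemma for low-dimensional surgery that must itself be proved and is nontrivial --- minimality forces $f$ to be as highly connected as surgery permits, i.e.\ roughly $\lfloor\tfrac12\dim N\rfloor$-connected, and in particular $f_*\colon\pi_1M\xrightarrow{\ \cong\ }\pi_1 N$. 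Write $\pi$ for the common fundamental group. The degree one hypothesis then supplies, via Poincar\'e--Lefschetz duality, an Umkehr splitting making $f^*\colon H^*(N;\mathcal L)\to H^*(M;f^*\mathcal L)$ a monomorphism for every $\mathbb Z\pi$-local system $\mathcal L$; in particular $f^*$ carries the Berstein--Schwarz class $\mathfrak b_N$ to $\mathfrak b_M$.

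The remaining task is to detect $\cat{N}$ by a class that survives $f^*$. When $N$ is ``small'' relative to $\cat{N}$ --- the range $\dim N\le 2r\cat N-3$ of the Theorem, with the aspherical case as its degenerate instance --- the connectivity gained by surgery is wide enough to force $\cat{N}$ to coincide with the cup-length of $\mathfrak b_N$, equivalently with the height of its top nonzero power, and then $\mathfrak b_M^{\,\cat N}=f^*\bigl(\mathfrak b_N^{\,\cat N}\bigr)\ne 0$ yields $\cat{M}\ge\cat{N}$, a contradiction. The open range is the one in which $\cat{N}$ strictly exceeds the cup-length of $\mathfrak b_N$ for \emph{every} coefficient system: there the missing category is encoded not in the cohomology ring but in the first nonvanishing obstruction to a section of the Ganea--Schwarz fibration $G_{\cat N-1}(N)\to N$, and one must understand how that obstruction is affected when $M$ is reconstructed from $N$ by attaching cells in the narrow band of dimensions around $\tfrac12\dim N$ that surgery cannot remove.

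The essential obstacle is structural: a degree one map yields only \emph{stable} information --- cohomological injectivity with arbitrary local coefficients, together with a stable retraction $\Sigma^\infty_+N\to\Sigma^\infty_+M$ splitting $\Sigma^\infty_+ f$ --- whereas Lusternik--Schnirelmann category is an unstable invariant, and there are stably split pairs on which it drops, so no formal transfer of a Ganea section exists. Closing the gap would require either an \emph{unstable} Umkehr carrying the first obstruction to a section of $N$'s Ganea tower to the corresponding obstruction for $M$, or a middle-dimensional surgery procedure that preserves a lower bound for $\cat{M}$ rather than merely its homology. The most concrete line of attack seems to be through category weight in the sense of Rudyak and Strom: show that a class realizing weight $\cat{N}$ on $N$ pulls back under $f$ to a class of the same weight on $M$, exploiting that $f$ is highly connected away from the middle dimension. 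Making that work across the entire middle-dimensional range, for arbitrary $\pi$ and without the normality hypothesis, is the crux that remains open.
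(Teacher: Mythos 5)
This statement is the open Rudyak Conjecture, and your text is not a proof of it: you yourself concede at the end that the crux ``remains open.'' The paper does not prove it either --- it only establishes the special case $\dim N\le 2r\cat N-3$ for normal degree one maps --- so the honest assessment is that your proposal is a research outline with a genuine, acknowledged gap, not an argument that can be checked and accepted. Beyond that global gap, two specific steps in your outline are shaky. First, the ``monotonicity lemma'' you need (that surgery below the middle dimension does not raise the category of the domain) is exactly the kind of statement that is not known and is likely false in general: attaching handles can create new products and raise category, and nothing in the minimal-counterexample setup forces it. The paper circumvents this entirely: it never compares $\cat{M'}$ with $\cat M$ after surgery. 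Instead, assuming $\cat M\le q<\cat N$, it takes the Ganea lift $\lambda:M\to G_q(N)$ of $f$ and extends it cell by cell over the \emph{trace} $W$ of the surgeries, which is homotopy equivalent to $M$ with cells of dimension at most $\tfrac{n}{2}+1$ attached; the extension exists because the fiber $\ast^{q+1}\Omega N$ of $p^N_q$ is $(r(q+1)-2)$-connected and the hypothesis $\dim N\le 2r\cat N-3$ makes the cell dimensions fall below that connectivity. The vanishing of the surgery obstruction (arranged in the paper by passing to $f\#\bar f$, using $\theta(\bar f)=-\theta(f)$, additivity, and the connected sum formula $\cat{M\#\bar M}=\cat M$) then gives a homotopy equivalence $f':M'\to N$ at the end of the bordism, and composing the extended lift with a homotopy inverse of $f'$ produces a section of $p^N_q$, contradicting $\cat N>q$.

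Second, your description of the theorem's range as the one where $\cat N$ coincides with the cup-length of the Berstein--Schwarz class is incorrect: for $(r-1)$-connected $N$ with $r\ge 2$ the fundamental group is trivial and that class vanishes, yet this is precisely the regime the paper handles; the detection there is via the Ganea--Schwarz section obstruction, not via any cohomological cup-length. Likewise, reducing the non-normal case to collapses onto spheres is unfounded --- non-normal degree one maps form a much larger class, and nothing in your sketch addresses them. So the concrete verdict: the conjecture as stated remains unproved both in the paper and in your proposal; if you want to salvage something checkable, redo your argument along the paper's lines (trace of the surgery plus Ganea lifting plus the $f\#\bar f$ trick), and you will recover exactly the restricted theorem, with the restriction $\dim N\le 2r\cat N-3$ arising precisely because the attached cells must stay below the connectivity of the Ganea fiber.
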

A simple example of a degree one map is the collapsing map for a connected sum onto one of its summands
$q:M\# N\to N$. Even for this special case, Rudyak's Conjecture was proven only recently~\cite{DS1}. In~\cite{Ru1}, Rudyak proved his conjecture in the following special case. 

\begin{thm}[Rudyak's Theorem]
Let $f:M\to N$ be a map of degree one between  closed stably parallelizable PL manifolds with $N$ being  $(r-1)$-connected for some $r\ge 1$. If $N$ satisfies the inequality
$\dim N\le 2r\cat N-4$, then $\cat M\ge\cat N$.
\end{thm}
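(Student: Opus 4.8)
The plan is to recast the inequality, via the fat-wedge description of category, as a destabilization-and-transfer argument, in which the surgery-theoretic ingredient is the stable Umkehr map attached to a degree one normal map. Write $k=\cat{N}$; we may assume $k\ge 1$, and we must produce $\cat{M}\ge k$. Recall Whitehead's characterization: for a based space $X$, $\cat{X}\le k-1$ if and only if the iterated diagonal $\Delta_k\colon X\to X^{k}$ factors up to homotopy through the fat wedge $T^{k}X\subset X^{k}$; composing $\Delta_k$ with the collapse $X^{k}\to X^{k}/T^{k}X=X^{\wedge k}$ gives the reduced diagonal $\bar\Delta_k(X)\colon X\to X^{\wedge k}$, the primary obstruction to such a factorization. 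Since $N$ is $(r-1)$-connected, $N^{\wedge k}$ is $(kr-1)$-connected. The first---and hardest---step is a \emph{detection lemma}: for an $(r-1)$-connected closed manifold $X$ and an integer $k\ge 1$ with $\dim X\le 2rk-3$, one has $\cat{X}\le k-1$ if and only if $\bar\Delta_k(X)\simeq *$; that is, in this range the higher obstructions to factoring $\Delta_k$ through $T^{k}X$ all vanish and only the primary term survives. Applying this to $X=N$ with $k=\cat{N}$ shows $\bar\Delta_k(N)\not\simeq *$, and since $\dim N\le 2rk-3\le 2(kr-1)$ we are in the Freudenthal range, so $[N,N^{\wedge k}]\to\{N,N^{\wedge k}\}$ is injective; hence $\Sigma^{\infty}\bar\Delta_k(N)\not\simeq *$.

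For the transfer, observe that $f$ induces the commuting square $f^{\wedge k}\circ\bar\Delta_k(M)=\bar\Delta_k(N)\circ f$ (after choosing compatible basepoints), while the trivial half of Whitehead's characterization gives, with no hypothesis on $M$ whatsoever, that $\cat{M}\le k-1\Rightarrow\bar\Delta_k(M)\simeq *$ --- this matters, since a priori $M$ need not lie in the metastable range. So suppose $\cat{M}\le k-1$. Then $\bar\Delta_k(N)\circ f\simeq *$, hence after stabilizing $\Sigma^{\infty}\bar\Delta_k(N)\circ\Sigma^{\infty}f\simeq *$. Now the hypothesis that $f$ is a \emph{normal} map is used: the underlying bundle map, via the Pontryagin--Thom construction, yields a stable Umkehr $\sigma\colon\Sigma^{\infty}N\to\Sigma^{\infty}M$ with $\Sigma^{\infty}f\circ\sigma\simeq\mathrm{id}$ --- the homotopy-level refinement of the surgery-kernel splitting $H_*(M)\cong H_*(N)\oplus K_*(f)$, and stable parallelizability, as in Rudyak's theorem, is merely one route to such a $\sigma$. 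Precomposing the nullhomotopy with $\sigma$ gives $\Sigma^{\infty}\bar\Delta_k(N)\simeq\bigl(\Sigma^{\infty}\bar\Delta_k(N)\circ\Sigma^{\infty}f\bigr)\circ\sigma\simeq *$, contradicting the previous paragraph. Therefore $\cat{M}\ge k$.

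I expect the detection lemma to be the main obstacle. For $\dim X\le 2rk-4$ it is essentially Rudyak's lemma; extending it one further dimension forces one to control \emph{all} the obstructions to factoring $\Delta_k$ through $T^{k}X$ --- equivalently, to sectioning the $(k-1)$-st Ganea fibration $G_{k-1}X\to X$ --- up through $\dim X$, rather than only the primary one. The favorable feature is that the Ganea fiber $(\Omega X)^{\ast k}$ is a $(k-1)$-fold suspension, so its Postnikov tower, hence the obstruction-theoretic analysis of the fibration, is amenable to Freudenthal--Blakers--Massey stable methods; the task is to push this analysis exactly to the edge of the stable range, which is where the sharp bound $\dim X\le 2r\cat{X}-3$ materializes (and why Rudyak, with less delicate methods, obtained only $2r\cat{X}-4$). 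By contrast the transfer step is essentially formal once the Pontryagin--Thom Umkehr of the normal map is available; the residual bookkeeping --- compatible basepoints, passing between $\Sigma^{\infty}$ and $\Sigma^{\infty}_{+}$, and the verification that the surgery-kernel splitting is realized by an actual stable map --- is routine surgery technology.
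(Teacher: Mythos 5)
Your route is not the paper's route: the paper obtains Rudyak's Theorem as a corollary of its surgery-theoretic main theorem (Remark 3.6) --- every map of a stably parallelizable manifold is normal and $2r\cat N-4\le 2r\cat N-3$, so Theorem 3.5 applies, whose proof kills the surgery obstruction by passing to $f\#\bar f$ and then extends a Ganea-type lift across the trace of the surgeries, whose cells have dimension at most $\tfrac n2+1$, against the $(r\cat N-2)$-connected Ganea fiber. What you propose is essentially Rudyak's original stable-homotopy argument (reduced diagonal, metastable detection, stable Umkehr for a degree one normal map), which the paper explicitly contrasts with its own method. The ``transfer'' half of your argument is fine: naturality $\bar\Delta_k(N)\circ f=f^{\wedge k}\circ\bar\Delta_k(M)$, the easy implication $\cat M\le k-1\Rightarrow\bar\Delta_k(M)\simeq *$, Freudenthal injectivity of $[N,N^{\wedge k}]\to\{N,N^{\wedge k}\}$ (valid since $\dim N\le 2rk-4\le 2(rk-1)$), and the existence of a stable Umkehr $\sigma$ with $\Sigma^\infty f_+\circ\sigma\simeq\mathrm{id}$ for a degree one normal map are all standard.

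The genuine gap is the ``detection lemma,'' which you state but do not prove, and which carries the entire difficulty of the theorem. The assertion that for an $(r-1)$-connected closed $X$ with $\cat X=k$ the reduced diagonal $\bar\Delta_k(X)$ (indeed its stabilization) is essential is exactly the point where the dimension hypothesis enters: the obstructions to sectioning $p^X_{k-1}:G_{k-1}(X)\to X$ lie in $H^{m+1}(X;\pi_m((\Omega X)^{*k}))$ with the fiber only $(rk-2)$-connected, and the reduction of all higher obstructions to the primary (stable) one is available precisely in the range $\dim X\le 2(rk-2)=2rk-4$; this is Rudyak's lemma, and invoking it as ``essentially Rudyak's lemma'' makes your argument circular as a self-contained proof of Rudyak's Theorem. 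Worse, you state the lemma in the range $\dim X\le 2rk-3$, one past the stable range, and only ``expect'' it to hold there; nothing in your sketch (or in the paper) establishes a weak-category-equals-category statement at $2rk-3$ --- the paper reaches the $-3$ bound only because its surgery mechanism attaches cells of dimension at most $\tfrac n2+1\le r\cat N-1$, a different source of room entirely. For the statement under review only the $-4$ range is needed, so the fix is to either prove the detection lemma in that range (the metastable obstruction/Hopf-invariant analysis) or cite it; as written, the core step is assumed rather than proved.
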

Rudyak's conjecture was also proven for manifolds of dimension $\le 4$ (see~\cite{Ru2}).

Surgery theory is a cornerstone in the classification problem for closed manifolds. A major step in this theory is turning a degree one normal map between manifolds into a homotopy equivalence.
This paper explores the applications of this technique from surgery theory to Rudyak's Conjecture. 

Our main result is Theorem 0.1. We note that it holds true for PL manifolds as well.
Though our approach differs from Rudyak's, surprisingly we obtained almost the same restrictions in our theorem on the dimension and the category as in~\cite{Ru1}, $\dim N\le 2r\cat N-4$. Our restriction is only slightly looser: $\dim N\le 2r\cat N-3$.
We recall that Rudyak's approach is based on stable cohomotopy theory, while we use surgery theory. Thus, our theorem replaces the stably parallelizable assumption on the manifolds 
in Rudyak's theorem by the weaker condition that the map $f$ is normal, and loosens the bound on dimension by one.

\section{Preliminaries}

\subsection{Connected sum formula}
We recall that the connected sum $M\# N$ of two manifolds depends on orientations whenever $M$ and $N$ are orientable. For an orientable manifold $M$ and $k\in\mathbb Z\setminus\{0\}$, by $kM$ we denote the connected sum of $k$ copies of $M$ if $k>0$ and the connected sum of $|k|$ copies of $\bar M$ if $k<0$, where $\bar M$ denotes $M$ with the opposite orientation.
\begin{thm}[\cite{DS1,DS2}]\label{sum}
For any two closed manifolds $$\cat{M\# N}=\max\{\cat M,\cat N\}.$$
\end{thm}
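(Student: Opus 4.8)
The plan is to prove the two inequalities $\cat{M\#N}\ge\max\{\cat M,\cat N\}$ and $\cat{M\#N}\le\max\{\cat M,\cat N\}$ separately, with essentially all the work in the second. For the lower bound, recall that a connected sum comes with collapse maps $q_M\colon M\#N\to M$ and $q_N\colon M\#N\to N$, each a degree one map between closed manifolds; hence the case of Rudyak's Conjecture for collapse maps proved in~\cite{DS1} gives $\cat{M\#N}\ge\cat M$ and $\cat{M\#N}\ge\cat N$ simultaneously.

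For the upper bound I would use the standard CW model of a connected sum. Writing $M_0=M\setminus\mathring D^n$ and $N_0=N\setminus\mathring D^n$, one has $M_0\simeq M^{(n-1)}$, $N_0\simeq N^{(n-1)}$, and
$$M\#N\ \simeq\ \bigl(M_0\vee N_0\bigr)\cup_{\psi_M*\psi_N}e^n,$$
where $\psi_M\colon S^{n-1}\to M_0$, $\psi_N\colon S^{n-1}\to N_0$ are the attaching maps of the top cells of $M$ and $N$ and $\psi_M*\psi_N$ is their sum, obtained by precomposing $\psi_M\vee\psi_N$ with the pinch map $S^{n-1}\to S^{n-1}\vee S^{n-1}$. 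Since $\cat{M_0\vee N_0}=\max\{\cat{M_0},\cat{N_0}\}$, since $\cat{M_0}\le\cat M$ and $\cat{N_0}\le\cat N$ for closed manifolds, and since attaching one cell raises category by at most one, the crude bound $\cat{M\#N}\le\max\{\cat M,\cat N\}+1$ is immediate; the whole point is to delete this extra $1$. I would formulate the required statement as the sharp cell-attachment estimate
$$\cat{\bigl(M_0\vee N_0\bigr)\cup_{\psi_M*\psi_N}e^n}\ \le\ \max\bigl\{\cat{M_0\cup_{\psi_M}e^n},\ \cat{N_0\cup_{\psi_N}e^n}\bigr\}=\max\{\cat M,\cat N\}.$$

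I expect this estimate to be the main obstacle, precisely because one cannot just restrict a categorical cover of $M$ to $M_0\subset M\#N$: the summand $N$ contributes classes in $\pi_1$ and higher homotopy that are essential in $M\#N$ but are killed by any null-homotopy taking place in $M$, so contractibility in $M$ does not pass to contractibility in $M\#N$. My approach would be geometric, through the splitting $M\#N=M_0\cup_{S^{n-1}}N_0$ along the connecting sphere. Fix a two-element categorical cover $S^{n-1}=P_0\cup P_1$ by enlarged hemispheres. Starting from categorical covers of $M_0$ and of $N_0$ — which have at most $\cat{M_0}+1\le k+1$ and $\cat{N_0}+1\le k+1$ members for $k:=\max\{\cat M,\cat N\}$, and whose members are automatically contractible in $M\#N$ — I would push them off the removed disks, make them product-like along a bicollar $S^{n-1}\times(-\varepsilon,\varepsilon)$, and arrange that on each side the trace of the $i$-th member on $S^{n-1}$ lies in $P_{i\bmod 2}$. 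Then the $i$-th members $A_i\subset M_0$ and $B_i\subset N_0$ meet, if at all, inside the contractible set $P_{i\bmod 2}\times(-\varepsilon,\varepsilon)$, so each union $A_i\cup B_i$ is again contractible in $M\#N$ — here one uses the elementary fact that $A\cup_C B$ is contractible in an ambient space whenever $A$, $B$, and $C=A\cap B$ are and all inclusions are cofibrations. The $k+1$ sets $A_0\cup B_0,\dots,A_k\cup B_k$ then cover $M\#N$, giving $\cat{M\#N}\le k$. The delicate, genuinely manifold-theoretic point — where collar neighborhoods and handle decompositions enter, and which is exactly what is carried out in~\cite{DS1,DS2} — is that the boundary traces of a categorical cover of $M_0$ (resp.\ $N_0$) can be made to conform to the fixed hemisphere cover $\{P_0,P_1\}$ without exceeding $\max\{\cat M,\cat N\}+1$ sets.
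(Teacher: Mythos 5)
The paper does not prove this statement at all: Theorem~\ref{sum} is imported verbatim from~\cite{DS1,DS2}, so there is no internal argument to compare against, and your proposal has to stand on its own. It does not. For the lower bound you invoke ``the case of Rudyak's Conjecture for collapse maps proved in~\cite{DS1}'' --- but that case \emph{is} (half of) the theorem you are asked to prove; as attribution this is consistent with the paper, but as a proof it is vacuous. The actual arguments in~\cite{DS1,DS2} for both inequalities are substantially more involved (obstruction-theoretic work with Ganea--Schwarz fibrations and the cell structure $(M_0\vee N_0)\cup e^n$ you describe), not a matching of categorical covers along the neck.

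For the upper bound there are two concrete gaps. First, the inequality $\cat{M_0}\le\cat{M}$ is asserted without justification: LS-category is not monotone under passing to subcomplexes or skeleta (any complex sits inside its cone, which has category $0$), so ``$M_0\simeq M^{(n-1)}$'' gives nothing, and the category of a punctured closed manifold versus that of the manifold is itself a delicate question, not an elementary lemma. Second, the ``elementary fact'' you lean on is false as stated: in $X=S^2$ take $A$ and $B$ to be closed caps each extending slightly past the equator, so $C=A\cap B$ is an annulus; then $A$, $B$, $C$ are all contractible \emph{in} $X$, all inclusions are cofibrations, yet $A\cup B=S^2$ is not contractible in $X$. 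The correct pushout statement requires $A$, $B$, and the actual intersection $C$ to be contractible in themselves, which your sets are not (they are only contractible in $M_0$, resp.\ $N_0$), and having $A_i\cap B_i$ merely contained in the contractible slab $P\times(-\varepsilon,\varepsilon)$ does not help: two large disks in $S^2$ meeting in an annulus that lies inside a punctured (hence contractible) sphere give the same counterexample. Finally, the step you yourself flag --- arranging the boundary traces of both covers to conform to the two-hemisphere cover of $S^{n-1}$ without increasing the number of sets --- is precisely the hard content, and you defer it to~\cite{DS1,DS2}; so the proposal reduces the theorem to the papers that prove it rather than proving it.
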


\subsection{Lower bound for cat} Clearly, for a CW complex $X$, the dimension $\dim X$ is an upper bound for $\cat X$. A natural lower bound for the LS-category is the cup length.
The {\it cup length} of a space $X$, $\smile$-length$(X)$, is defined as the maximal $k$ such that there are $k$ cohomology classes of dimension $>0$, $\alpha_i\in H^*(X;M_i)$, $i=1,\dots,k$ with the cup product $\alpha_1\smile\dots\smile\alpha_k\ne0$.
Here the cup product is understood in a generalized sense: All $M_i$ are $\pi_1(X)$-modules
and $\alpha_1\smile\dots\smile\alpha_k\in H^*(X;M_1\otimes\dots\otimes M_k)$ where the tensor product is taken over $\mathbb Z$. The following theorem is classic (we refer to~\cite{CLOT} for a proof):
\begin{thm}
If $X$ is an ANR, then $\smile$-length$(X)\le\cat X$.
\end{thm}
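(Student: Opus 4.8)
The plan is to establish the equivalent statement that every cup product of $\cat X+1$ cohomology classes of positive degree vanishes, which immediately yields $\smile\text{-length}(X)\le\cat X$. First I would fix a categorical open cover $X=U_0\cup U_1\cup\cdots\cup U_k$ with $k=\cat X$, each $U_i$ contractible in $X$, together with classes $\alpha_i\in H^{n_i}(X;M_i)$, $n_i>0$, $i=0,\dots,k$. Since the inclusion $\iota_i\colon U_i\hookrightarrow X$ is null-homotopic, it is homotopic to a constant map $c_i$ at some point $x_i\in X$. Invoking homotopy invariance of singular cohomology with local coefficients — where the chosen null-homotopy is used to identify $\iota_i^{*}M_i$ with $c_i^{*}M_i$ — one gets that $\iota_i^{*}\alpha_i$ corresponds, under that identification, to $c_i^{*}\alpha_i$, which factors through $H^{n_i}(\mathrm{pt};(M_i)_{x_i})=0$. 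Hence each $\alpha_i$ restricts to $0$ on $U_i$.

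Next, from the long exact sequence of the pair $(X,U_i)$ with coefficients in $M_i$,
$$\cdots\to H^{n_i}(X,U_i;M_i)\xrightarrow{\ p_i^{*}\ }H^{n_i}(X;M_i)\xrightarrow{\ \iota_i^{*}\ }H^{n_i}(U_i;\iota_i^{*}M_i)\to\cdots,$$
the vanishing $\iota_i^{*}\alpha_i=0$ gives a relative lift $\bar\alpha_i\in H^{n_i}(X,U_i;M_i)$ with $p_i^{*}\bar\alpha_i=\alpha_i$. I would then iterate the relative cup product
$$H^{p}(X,A;M)\otimes H^{q}(X,B;N)\longrightarrow H^{p+q}(X,A\cup B;M\otimes N),$$
available because at the $j$-th stage the couple $\big(U_j,\ U_0\cup\cdots\cup U_{j-1}\big)$ consists of open sets and is therefore excisive, to form
$$\bar\alpha:=\bar\alpha_0\smile\bar\alpha_1\smile\cdots\smile\bar\alpha_k\in H^{n_0+\cdots+n_k}\!\big(X,\ U_0\cup\cdots\cup U_k;\ M_0\otimes\cdots\otimes M_k\big).$$
Since $U_0\cup\cdots\cup U_k=X$, this group is $H^{*}(X,X;-)=0$, so $\bar\alpha=0$. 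Because the relative cup product is natural with respect to the maps $p^{*}$ induced by inclusions of pairs, the image of $\bar\alpha$ in $H^{*}(X;M_0\otimes\cdots\otimes M_k)$ equals $\alpha_0\smile\cdots\smile\alpha_k$; hence $\alpha_0\smile\cdots\smile\alpha_k=0$, and $\smile\text{-length}(X)\le\cat X$ follows.

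The null-homotopy reduction is routine; the step requiring genuine care is the relative cup product with \emph{twisted} coefficients and its naturality. One must verify that the coefficient pairing assembles into the tensor product $M_0\otimes\cdots\otimes M_k$ taken over $\mathbb Z$ with the diagonal $\pi_1(X)$-action (so that the product lands in cohomology with exactly the coefficient system named in the statement), that the excisiveness needed to iterate pairwise products holds — which here is automatic since every $U_i$ is open — and that the external product commutes both with the connecting homomorphisms and with the restriction maps $p^{*}$, so that passing to absolute classes recovers $\alpha_0\smile\cdots\smile\alpha_k$. The ANR hypothesis enters only to keep $X$ and the relevant subsets well behaved (e.g. of the homotopy type of CW complexes) for these singular-cohomology manipulations; with that in place, the argument proceeds as above.
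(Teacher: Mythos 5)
Your argument is correct and is essentially the classical proof: restrict each class to a categorical open set where it dies, lift to a relative class, and multiply the relative classes into $H^*(X,X;-)=0$, with the local-coefficient and excisiveness points handled as you indicate. The paper itself gives no proof, deferring to \cite{CLOT}, and the argument there is the same one you have written out.
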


\subsection{Essential manifolds} In ~\cite{G}, Gromov calls a closed $n$-manifold $M$ {\em essential} if
a map $u_M:M\to B\pi$ that induces an isomorphism of the fundamental groups, $\pi=\pi_1(M)$, cannot be deformed into the $(n-1)$-skeleton $B\pi^{(n-1)}$. The following is well-known:
\begin{thm}[\cite{Sch,DR}]
For the cohomological dimension of a discrete group $\pi$,
$$cd(\pi)=\max\{k\mid\beta_\pi^k\ne 0\}$$ where $\beta_\pi\in H^1(B\pi;I(\pi))$ is the Berstein-Schwarz class.
\end{thm}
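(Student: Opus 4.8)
\emph{Proof proposal.} The statement is equivalent to the \emph{universality} of the Berstein--Schwarz class, and that is what I would isolate and prove first. Recall that $\beta_\pi\in H^1(B\pi;I(\pi))$ is the image of $1\in H^0(B\pi;\mathbb Z)=\mathbb Z$ under the connecting homomorphism of the coefficient sequence $0\to I(\pi)\to\mathbb Z\pi\xrightarrow{\varepsilon}\mathbb Z\to 0$, where $I(\pi)=\ker\varepsilon$. Writing $H^*(B\pi;-)=H^*(\pi;-)=\mathrm{Ext}^*_{\mathbb Z\pi}(\mathbb Z,-)$, the cup products $\beta_\pi^k$ live in $H^k(\pi;I(\pi)^{\otimes k})$, the tensor powers taken over $\mathbb Z$ with the diagonal $\pi$-action, as in the generalized cup product of Section 1.2. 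The universality property I would establish is: for every $\mathbb Z\pi$-module $L$ and every $k\ge 1$, every class $\alpha\in H^k(\pi;L)$ has the form $\alpha=h_*(\beta_\pi^k)$ for some $\mathbb Z\pi$-homomorphism $h\colon I(\pi)^{\otimes k}\to L$. Granting this, one direction of the theorem is immediate: if $\beta_\pi^k\ne 0$ then $H^k(\pi;I(\pi)^{\otimes k})\ne 0$, so $cd(\pi)\ge k$, hence $cd(\pi)\ge\max\{k\mid\beta_\pi^k\ne 0\}$.

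For the universality property the point is to recognize $\beta_\pi^k$ as the class of an explicit $k$-fold extension. Tensoring the basic sequence with $I(\pi)^{\otimes j}$ over $\mathbb Z$ — legitimate, since $I(\pi)$, $\mathbb Z\pi$ and $\mathbb Z$ are all free abelian, so these sequences remain exact — and splicing the results yields an exact sequence of $\mathbb Z\pi$-modules
$$0\to I(\pi)^{\otimes k}\to \mathbb Z\pi\otimes I(\pi)^{\otimes(k-1)}\to\cdots\to\mathbb Z\pi\otimes I(\pi)\to\mathbb Z\pi\to\mathbb Z\to 0 .$$
Using that cup product with $\beta_\pi$ agrees, up to sign, with the connecting homomorphism of the tensored basic sequence, one checks that the Yoneda class of this $k$-extension in $\mathrm{Ext}^k_{\mathbb Z\pi}(\mathbb Z,I(\pi)^{\otimes k})$ is $\beta_\pi^k$. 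Each module $P_j:=\mathbb Z\pi\otimes_{\mathbb Z} I(\pi)^{\otimes j}$ with the diagonal action is a \emph{free} $\mathbb Z\pi$-module: the untwisting isomorphism $g\otimes x\mapsto g\otimes g^{-1}x$ identifies it with the induced module $\mathbb Z\pi\otimes_{\mathbb Z}V$, where $V$ is $I(\pi)^{\otimes j}$ with trivial action, and that is free on any $\mathbb Z$-basis of $V$. So the displayed sequence is a partial free resolution of $\mathbb Z$ over $\mathbb Z\pi$ with $k$-th syzygy $I(\pi)^{\otimes k}$. Given $\alpha\in H^k(\pi;L)$, represent it by a $k$-extension $0\to L\to X_{k-1}\to\cdots\to X_0\to\mathbb Z\to 0$; the comparison theorem for resolutions by projectives produces a chain map from the displayed sequence to this one covering $\mathrm{id}_{\mathbb Z}$, and its leftmost component is the desired $h\colon I(\pi)^{\otimes k}\to L$ with $h_*(\beta_\pi^k)=\alpha$.

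Finally I would assemble the theorem. Put $m=\max\{k\mid\beta_\pi^k\ne 0\}\in\{0,1,\dots,\infty\}$; the inequality $cd(\pi)\ge m$ was noted above. If $m=\infty$ there is nothing more to prove, so assume $m<\infty$; then $\beta_\pi^{m+1}=0$, hence $\beta_\pi^{\,j}=\beta_\pi^{m+1}\smile\beta_\pi^{\,j-m-1}=0$ for every $j>m$. By universality, for any $\mathbb Z\pi$-module $L$ and any $j>m$, every $\alpha\in H^j(\pi;L)$ equals $h_*(\beta_\pi^{\,j})=h_*(0)=0$, so $H^j(\pi;L)=0$. Thus $cd(\pi)\le m$, and combined with the reverse inequality this gives $cd(\pi)=m$.

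I expect the main obstacle to be the middle step: verifying cleanly that the spliced tensored sequence carries $\beta_\pi^k$ as its $\mathrm{Ext}$-class, i.e. the compatibility of the iterated cup product with $\beta_\pi$ and the iterated connecting homomorphisms of the tensored basic sequences. Everything else — freeness of the $P_j$ via untwisting, and the comparison theorem — is routine homological algebra. One should also note the degenerate case $m=0$: there $\beta_\pi=0$ forces the basic sequence to split, so $\mathbb Z$ is $\mathbb Z\pi$-projective and $\pi$ is trivial, consistently with $cd(\pi)=0$.
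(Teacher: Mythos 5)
The paper offers no proof of this statement---it is quoted from \cite{Sch,DR}---and your argument is precisely the standard one from those references: establish universality of $\beta_\pi$ by splicing the $\mathbb Z$-tensored augmentation sequences into a partial free resolution (freeness of the middle terms by untwisting, identification of its Yoneda class with $\beta_\pi^k$ because cupping with $\beta_\pi$ is the connecting homomorphism), then read off both inequalities for $cd(\pi)$. The proof is correct as proposed, including the treatment of the degenerate cases $m=0$ and $m=\infty$, so there is nothing to add beyond noting that it coincides with the cited sources' approach rather than with anything internal to this paper.
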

This theorem implies that for an essential $n$-manifold $M$, $(u^*\beta_\pi)^n\ne 0$, i.e.,
$\smile$-length$(M)=n$, and, hence, $\cat M=n$. We note that the converse also holds true: The condition $\cat M=n$ for a closed $n$-manifold implies that
$M$ is essential~\cite{KR}.

\

\subsection{Ganea-Schwarz's approach to cat}Recall that an element of an iterated join $X_0*X_1*\cdots*X_n$ of topological spaces is a formal linear combination $t_0x_0+\cdots +t_nx_n$ of points $x_i\in X_i$ with $\sum t_i=1$, $t_i\ge 0$, in which all terms of the form $0x_i$ are dropped. Given fibrations $f_i:X_i\to Y$ for $i=0, ..., n$, the fiberwise join of spaces $X_0, ..., X_n$ is defined to be the space
\[
    X_0*_YX_1*_Y\cdots *_YX_n=\{\ t_0x_0+\cdots +t_nx_n\in X_0*\cdots *X_n\ |\ f_0(x_0)=\cdots =f_n(x_n)\ \}.
\]
The fiberwise join of fibrations $f_0, ..., f_n$ is the fibration 
\[
    f_0*_Y*\cdots *_Yf_n: X_0*_YX_1*_Y\cdots *_YX_n \longrightarrow Y
\]
defined by taking a point $t_0x_0+\cdots +t_nx_n$ to $f_i(x_i)$ for any $i$ such that $t_i \neq 0$. 

When $X_i=X$ and $f_i=f:X\to Y$ for all $i$  the fiberwise join of spaces is denoted by $*^{n+1}_YX$ and the fiberwise join of fibrations is denoted by $*_Y^{n+1}f$. 

For a path connected space $X$, we turn an inclusion of a point $*\to X$ into a fibration $p^X_0:G_0(X)\to X$, whose fiber is known to be the loop space $\Omega X$.  The $n$-th Ganea space of $X$ is defined to be the space $G_n(X)=*_X^{n+1}G_0(X)$, while the $n$-th Ganea fibration $p^X_n:G_n(X)\to X$
is the fiberwise join $\ast^{n+1}_Xp^X_0$. Then the fiber of
$p^X_n$ is $\ast^{n+1}\Omega X$.

The Schwarz theorem~\cite{Sch},\cite{CLOT} implies the following: 
\begin{thm}
If $X$ is a connected ANR, then  $\cat X\le n$ if and only if the fibration $p^X_n:G_n(X)\to X$ admits a section.
\end{thm}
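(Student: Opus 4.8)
The plan is to deduce the theorem from the classical notion of the \emph{Schwarz genus} (sectional category) of a fibration together with two of its standard properties. Recall that for a fibration $p\colon E\to B$ the Schwarz genus $\mathrm{genus}(p)$ is the least integer $n$ such that $B$ admits an open cover $U_0,\dots,U_n$ over each of which $p$ has a partial section $s_i\colon U_i\to E$. The argument then splits into three pieces: (i) $\mathrm{genus}(p^X_0)=\cat X$; (ii) for any fibration $p$ over a metrizable ANR, $\mathrm{genus}(p)\le n$ if and only if the iterated fiberwise join $\ast^{n+1}_Bp$ admits a section; (iii) the purely formal identity $\ast^{n+1}_X p^X_0=p^X_n\colon G_n(X)\to X$. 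Chaining (i), (ii), (iii) gives the statement.

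For (i) I would argue as follows. In the standard construction, $G_0(X)$ is the space of paths in $X$ that begin at the basepoint and $p^X_0$ is evaluation at the endpoint. Hence a partial section of $p^X_0$ over an open set $U\subseteq X$ is exactly a continuous choice, for each $u\in U$, of a path in $X$ from the basepoint to $u$; via the usual adjunction (and the homotopy lifting property, since the total space is contractible) this is the same datum as a null-homotopy of the inclusion $U\hookrightarrow X$, i.e.\ a witness that $U$ is contractible in $X$. Thus an open cover of $X$ realizes $\cat X\le n$ if and only if it realizes $\mathrm{genus}(p^X_0)\le n$, so the two numbers coincide.

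For (ii) I would prove both implications directly. For ``only if'', start from partial sections $s_i\colon U_i\to E$ over an open cover $\{U_i\}_{i=0}^{n}$; using paracompactness of the metrizable ANR, shrink the cover and choose a subordinate partition of unity $\{\lambda_i\}$ with $\mathrm{supp}\,\lambda_i\subseteq U_i$ closed in $B$, and set $\sigma(b)=\sum_i\lambda_i(b)\,s_i(b)$, where the sum is taken in the fiberwise join with the convention that a term $0\cdot s_i(b)$ is deleted, so $s_i$ is only ever evaluated on $U_i$; one then checks that $\sigma$ is a continuous section. For ``if'', a section of $\ast^{n+1}_Bp$ is a map $b\mapsto\sum_i t_i(b)\,e_i(b)$ with continuous coordinate functions $t_i\colon B\to[0,1]$, $\sum_i t_i\equiv 1$, and $e_i(b)\in p^{-1}(b)$ defined whenever $t_i(b)>0$; then the open sets $U_i:=\{b\mid t_i(b)>0\}$ cover $B$ and each $e_i|_{U_i}$ is a continuous partial section, so $\mathrm{genus}(p)\le n$. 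Finally, (iii) is immediate from the definition $G_n(X)=\ast^{n+1}_X G_0(X)$, $p^X_n=\ast^{n+1}_X p^X_0$, so that (ii) applied to $p=p^X_0$ together with (i) gives $\cat X\le n$ iff $p^X_n$ admits a section.

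The step I expect to be the main obstacle is (ii), and within it the ``only if'' direction: one must arrange the partition of unity so that no $s_i$ is ever evaluated outside its domain $U_i$, and then verify continuity of $\sigma$ precisely at the points where some $\lambda_i(b)$ vanishes. This is exactly where the \emph{fiberwise} join — rather than a product or the ordinary join — is the correct construction, since a term $0\cdot e$ is formally discarded and the limit of $\lambda_i(b)\,s_i(b)$ as $\lambda_i(b)\to 0$ is insensitive to the behaviour of $s_i$; the hypothesis that $X$ is a metrizable ANR, hence paracompact, is what supplies the shrinking lemma and the partition of unity that make this work.
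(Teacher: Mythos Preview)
Your proposal is correct and is precisely the standard Schwarz--Ganea argument that the paper invokes: the paper does not give its own proof of this theorem but merely records it as a consequence of the Schwarz theorem, citing \cite{Sch} and \cite{CLOT}. Your steps (i)--(iii) are exactly how that cited result is established---(ii) is Schwarz's theorem on the genus of a fiberwise join, (i) identifies $\mathrm{genus}(p^X_0)$ with $\cat X$ via the path fibration, and (iii) is definitional---so your approach coincides with what the paper appeals to.
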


\subsection{Maps of degree one} 
A typical example of a degree one map is the collapsing of the connected sum
$q:M\# N\to N$ onto a summand. So it is natural to think that 
the codomain of a degree one map is somehow less ``complex" than the domain. For general degree one maps this is supported by the following well-known fact:

\begin{prop} \label{epihom}
If $f:M \to N$ is a degree one map between closed manifolds, then for any coefficients the induced homomorphism $f_\ast:H_\ast(M) \to H_\ast(N)$ is a split surjection and $f^*:H^\ast(N)\to H^\ast(M)$ is a split injection.
\end{prop}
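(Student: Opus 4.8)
The plan is to use Poincaré duality on both $M$ and $N$ together with the defining property of a degree one map, namely that $f_*[M] = [N]$ for suitable fundamental classes (with twisted $\mathbb{Z}$-coefficients if the manifolds are non-orientable). The key algebraic input is that cap product with the fundamental class is a natural transformation: for every coefficient module one has a commuting square relating $f_*$ on homology and $f^*$ on cohomology via the duality isomorphisms $\cdot \cap [M]: H^k(M) \to H_{n-k}(M)$ and $\cdot \cap [N]: H^k(N) \to H_{n-k}(N)$, where $n = \dim N = \dim M$. Naturality of cap product gives the projection formula $f_*(f^*(\alpha) \cap [M]) = \alpha \cap f_*[M] = \alpha \cap [N]$.

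First I would set up the duality isomorphisms and the projection formula precisely, being careful about the orientation character so that the argument works ``for any coefficients'' as claimed; in the non-orientable case one works with the orientation local system and everything below goes through verbatim. Second, from the projection formula I would construct an explicit one-sided inverse to $f^*: H^k(N) \to H^k(M)$: define $g = D_N \circ f_* \circ D_M^{-1}: H^k(M) \to H^k(N)$, where $D_M, D_N$ are the Poincaré duality isomorphisms in the appropriate degrees. Then $g \circ f^* = \mathrm{id}$ on $H^*(N)$ is exactly a restatement of the projection formula $f_*(f^*\alpha \cap [M]) = \alpha \cap [N]$, so $f^*$ is a split injection. Third, I would dualize: applying the same reasoning to homology, or simply using the universal coefficient theorem together with the fact that a split injection on cohomology forces the corresponding homology map to be a split surjection, gives that $f_*: H_*(M) \to H_*(N)$ is split surjective. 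Alternatively one argues directly with the homology projection formula $f_*( \alpha \cap z ) = f^*$-adjoint relations, defining the splitting of $f_*$ as $D_M \circ f^* \circ D_N^{-1}$.

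The only real subtlety — and the step I would treat most carefully rather than as routine — is the bookkeeping with twisted coefficients and the orientation character, since the statement is asserted for arbitrary coefficient modules and the manifolds need not be orientable; one must check that ``degree one'' is interpreted as $f_*[M] = [N]$ in $H_n(N; \mathbb{Z}^w)$ with $w$ the orientation character, and that $f^*$ pulls the orientation character of $N$ back to that of $M$ so that the two Poincaré duality isomorphisms are compatible. Once that compatibility is in place, the projection formula and the construction of the splittings are formal. Since this proposition is classical, I expect the authors' proof to be short and to amount essentially to this cap-product/Poincaré-duality argument.
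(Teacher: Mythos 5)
Your argument is correct and is exactly the standard one: the paper itself omits the proof, referring to \cite{Ru1} and \cite{DS1}, and those sources prove the proposition by the same Poincar\'e duality/projection-formula construction of the splittings $D_M\circ f^*\circ D_N^{-1}$ and $D_N\circ f_*\circ D_M^{-1}$ that you describe, with the same care about local coefficients. Nothing essential is missing from your proposal.
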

We refer for the proof to \cite{Ru1} or \cite{DS1}.

\subsection{The trace of the surgery} Suppose that $S\subset M$ is an embedded $k$-sphere $S^k$ into an $n$-manifold $M$ that has a trivial tubular neighborhood
$N=S^k\times D^{n-k}$. Thus, $N\subset\partial(D^{k+1}\times D^{n-k})$. The   $n$-manifold $M'=M\setminus Int(N)\cup (D^{k+1}\times\partial D^{n-k})$ 
is said to be obtained from $M$ by {\em the $k$-surgery along the sphere $S$}. The {\em trace of this surgery} is defined to be the bordism $W$ between $M$ and $M'$ defined as $(M\times[0,1])\cup (D^{k+1}\times D^{n-k})$.
\begin{prop}
The trace $W$ of a $k$-surgery along $S$ is homotopy equivalent to $M\cup_\phi D^{k+1}$ for some attaching map $\phi$. Moreover, there is a homotopy equivalence of pairs
$h:(M\cup_\phi D^{k+1},M)\to (W,M)$.
\end{prop}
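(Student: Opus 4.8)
The plan is to strip $W$ down to $M$ with one cell attached, by a short chain of deformation retractions, keeping track of the copy $M=M\times\{0\}\subset W$ throughout. Write the handle as $H=D^{k+1}\times D^{n-k}$, so that $W=(M\times[0,1])\cup_{N\times\{1\}}H$, where $N\times\{1\}=S^k\times D^{n-k}\times\{1\}$ is identified with the part $S^k\times D^{n-k}$ of $\partial H$; let $\iota\colon N=S^k\times D^{n-k}\hookrightarrow M$ be the inclusion of the tubular neighborhood and $\phi=\iota|_{S^k\times\{0\}}\colon S^k\to M$ the embedding of the core sphere $S$. The first step is to deformation retract $H$, rel the subspace $S^k\times D^{n-k}$, onto $Z:=(D^{k+1}\times\{0\})\cup(S^k\times D^{n-k})$. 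Here $Z$ is contractible: collapse the $D^{n-k}$-fibers over $S^k$, which moves $Z$ onto $D^{k+1}\times\{0\}$, and then contract that disk. Moreover $Z\hookrightarrow H$ is a cofibration, so, being also a homotopy equivalence, it is a strong deformation retract by the standard criterion. Since this retraction is stationary on $S^k\times D^{n-k}$, it extends by the identity on $M\times[0,1]$ to a deformation retraction of $W$ onto $W_2:=(M\times[0,1])\cup(D^{k+1}\times\{0\})$ that fixes $M\times\{0\}$ pointwise.

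The second step is to recognize $W_2$ as a cell attachment. The core disk $D^{k+1}\times\{0\}\cong D^{k+1}$ meets $M\times[0,1]$ exactly along its boundary $S^k\times\{0\}$, which inside $W$ lies in $S^k\times D^{n-k}\times\{1\}\subset M\times\{1\}$ --- indeed it is the copy of $S$ at level $1$ --- so $W_2=(M\times[0,1])\cup_\psi D^{k+1}$ with $\psi(s)=(\phi(s),1)$. Because $M\times\{0\}\hookrightarrow M\times[0,1]$ is a deformation retract, $\psi$ is homotopic in $M\times[0,1]$ to $\psi'(s)=(\phi(s),0)$; and attaching a cell along homotopic maps yields homotopy equivalent spaces via an equivalence that is the identity on the base, so there is a homotopy equivalence $(M\times[0,1])\cup_{\psi'}D^{k+1}\to W_2$ restricting to the identity on $M\times[0,1]$. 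Finally, the deformation retraction of $M\times[0,1]$ onto $M\times\{0\}$, being stationary on $M\times\{0\}\supset\psi'(S^k)$, extends by the identity on the attached cell to a deformation retraction of $(M\times[0,1])\cup_{\psi'}D^{k+1}$ onto $(M\times\{0\})\cup_{\psi'}D^{k+1}=M\cup_\phi D^{k+1}$.

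Composing the inclusion $M\cup_\phi D^{k+1}=(M\times\{0\})\cup_{\psi'}D^{k+1}\hookrightarrow(M\times[0,1])\cup_{\psi'}D^{k+1}$, the equivalence of the second step, and the inclusion $W_2\hookrightarrow W$ yields a map $h\colon M\cup_\phi D^{k+1}\to W$. Each of the three is a homotopy equivalence, hence so is $h$; and each is either an inclusion carrying $M\times\{0\}$ canonically or the identity on $M\times[0,1]\supset M\times\{0\}$, so $h$ restricts on $M=M\times\{0\}$ to the canonical inclusion $M\hookrightarrow W$. Since $M\hookrightarrow M\cup_\phi D^{k+1}$ and $M\hookrightarrow W$ are cofibrations, it follows that $h$ is a homotopy equivalence of pairs $(M\cup_\phi D^{k+1},M)\to(W,M)$.

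I expect the only delicate point to be the first step. The naive attempt --- deformation retracting $H=D^{k+1}\times D^{n-k}$ straight onto its core $D^{k+1}\times\{0\}$ by collapsing the $D^{n-k}$-factor --- fails, because that homotopy is not stationary on the gluing tube $S^k\times D^{n-k}$ and so does not extend over $W$; retracting instead onto the larger contractible set $Z$ and invoking the cofibration criterion is exactly what repairs this. Everything afterwards is routine homotopy invariance of cell attachment and of collapsing a cylinder, together with the bookkeeping that the subspace $M$ is preserved. (Alternatively one can package the argument by noting that $W$ is the homotopy pushout of $M\times[0,1]\leftarrow N\times\{1\}\to H$, mapping this diagram by coordinate projections --- all homotopy equivalences, the left-hand square commuting up to the homotopy $(s,y,t)\mapsto\iota(s,(1-t)y)$ --- to $M\xleftarrow{\phi}S^k\hookrightarrow D^{k+1}$, and applying the gluing lemma for homotopy pushouts.)
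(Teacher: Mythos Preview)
Your proof is correct. Both your argument and the paper's rest on the same geometric idea---squeezing out the $D^{n-k}$-direction of the handle so that only the core $(k{+}1)$-disk remains attached to $M\times[0,1]$, then collapsing the cylinder---but the implementations differ. The paper performs the first collapse by a cell-like \emph{quotient} map $q:W\to X$ that sends each fiber $\{x\}\times D^{n-k}$, $x\in D^{k+1}$, to a point, and then cites that cell-like maps between ANRs are homotopy equivalences; this is exactly the ``naive'' collapse you warn against, made legitimate by passing to a quotient rather than seeking a retraction. You instead realize the collapse by a strong deformation retraction of $H$ onto $Z=(D^{k+1}\times\{0\})\cup(S^k\times D^{n-k})$, which is stationary on the gluing tube and so extends over $W$; this buys you a more elementary argument (no cell-like map machinery) and makes the pair structure transparent, at the cost of a couple of extra lines. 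Your alternative packaging via the homotopy pushout and the gluing lemma is likewise valid and is a third route to the same conclusion.
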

\begin{proof}
We define a homotopy equivalence  $g:W\to M\cup_\phi D^{k+1}$.
We consider a cell-like map $q:W\to X$ defined by the decomposition of $W$ into
the $(n-k)$-disks $x\times D^{n-k}$, $x\in D^{k+1}$, and singletons. Then  
$q(M)\cong M$ and $q(M\times [0,1])\cong M\times[0,1]$. Thus, we obtain a homotopy equivalence $q:W \to q(M \times [0,1]) \cup q(D^{k+1} \times D^{n-k}) = (M\times[0,1])\cup_\psi D^{k+1}$, where $\psi$ is the composition of the attaching map of the handle $D^{k+1} \times D^{n-k}$ with $q$.
We extend the projection $\pi:M\times[0,1]\to M$ to a 
homotopy equivalence  $g:W\to M\cup_{\pi\circ\psi} D^{k+1}$. Clearly, the restriction of $g$ to $M$ is a homotopy equivalence.
\end{proof}
The trace of the surgery is defined for a chain of consecutive surgeries by gluing together
the corresponding bordisms so that the argument of the above proposition can be applied to a chain of surgeries.
\begin{cor}\label{he}
There is a homotopy equivalence of pairs $$h:(M\cup_{\phi_1}D^{n_1}\cup_{\phi_2}D^{n_2}\cup\dots\cup_{\phi_k}D^{n_k},M)\to (W,M)$$ where  $W$ is the trace of a chain of  surgeries in dimensions $n_1-1,\dots,n_k-1$ originating on a manifold $M$.
\end{cor}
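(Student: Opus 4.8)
The plan is to build the homotopy equivalence one surgery at a time, iterating the argument of the preceding Proposition. Write $M=M_0$, and let $M_i$ be the result of the $(n_i-1)$-surgery performed on $M_{i-1}$, so that the trace of this single surgery is a bordism $W_i$ from $M_{i-1}$ to $M_i$ with $M_{i-1}=M_{i-1}\times\{0\}$ as its bottom boundary component. By definition the trace of the whole chain is the iterated gluing $W=W_1\cup_{M_1}W_2\cup_{M_2}\cdots\cup_{M_{k-1}}W_k$; write $W^{(j)}=W_1\cup_{M_1}\cdots\cup_{M_{j-1}}W_j$ for the trace of the first $j$ surgeries, with $W^{(0)}=M$. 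I will construct, by induction on $j$, a homotopy equivalence $g_j\colon W^{(j)}\to X_j$ onto $X_j=M\cup_{\phi_1}D^{n_1}\cup\cdots\cup_{\phi_j}D^{n_j}$ for suitable attaching maps $\phi_i$, with $g_j$ restricting to the identity on $M$; since $g_0=\mathrm{id}_M$ serves as the base case, only the step $j-1\to j$ is at issue. The map $h$ is then a homotopy inverse of $g_k$ regarded as a map of pairs.

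For the inductive step, assume $g_{j-1}\colon W^{(j-1)}\to X_{j-1}$ has been constructed with $g_{j-1}|_M=\mathrm{id}$. Note that $W^{(j)}=W^{(j-1)}\cup_{M_{j-1}}W_j$, where $M_{j-1}$ is the top boundary component of $W^{(j-1)}$ and the bottom boundary component of $W_j$, both inclusions being cofibrations because boundary components of a compact manifold have collars. Applying the preceding Proposition to the single surgery with trace $W_j$ gives a homotopy equivalence $r\colon W_j\to M_{j-1}\cup_{\phi'}D^{n_j}$ that restricts to the identity on $M_{j-1}$; indeed, in that Proposition $r$ is built from the projection $M_{j-1}\times[0,1]\to M_{j-1}$, which is the identity on $M_{j-1}\times\{0\}$. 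Gluing $r$ to $\mathrm{id}_{W^{(j-1)}}$ along $M_{j-1}$ and invoking the gluing lemma for homotopy equivalences of pushouts along cofibrations produces a homotopy equivalence $G\colon W^{(j)}\to W^{(j-1)}\cup_{\phi''}D^{n_j}$, where $\phi''\colon S^{n_j-1}\to M_{j-1}\hookrightarrow W^{(j-1)}$, and $G$ is the identity on $W^{(j-1)}$, in particular on $M$.

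It remains to transport the cell attachment across $g_{j-1}$. Since attaching a cell is a homotopy functor — a homotopy equivalence $a\colon X\to Y$ together with a map $\gamma\colon S^{m-1}\to X$ induces a homotopy equivalence $X\cup_\gamma D^m\to Y\cup_{a\gamma}D^m$ restricting to $a$ on $X$ — applying this with $a=g_{j-1}$ and $\gamma=\phi''$, and setting $\phi_j:=g_{j-1}\circ\phi''$, yields a homotopy equivalence $H\colon W^{(j-1)}\cup_{\phi''}D^{n_j}\to X_{j-1}\cup_{\phi_j}D^{n_j}=X_j$ restricting to $g_{j-1}$, hence to the identity on $M$. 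Putting $g_j:=H\circ G$ completes the induction. Finally, since $M\hookrightarrow W$ and $M\hookrightarrow X_k$ are cofibrations and $g_k|_M=\mathrm{id}$, the map $g_k$ is a homotopy equivalence of pairs $(W,M)\to(X_k,M)$, and we take $h$ to be a homotopy inverse of pairs.

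I expect no real obstacle here; the argument is routine, and the only points needing care are the bookkeeping of the pairs through the induction and the fact that the attaching maps $\phi_i$ emerge only after being carried across the inductive homotopy equivalences, which is why the statement asserts merely their existence. One could instead try to mimic the Proposition's proof in one stroke, simultaneously collapsing the disk fibres of all the handles of $W$ together with the intermediate collars $M_i\times[0,1]$; the resulting cell-like map is again a homotopy equivalence onto $M$ with $k$ cells attached, but then some care is needed to make the collapses compatible on the overlaps of the handles with the collars, so the inductive route seems cleaner.
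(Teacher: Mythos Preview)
Your inductive argument is correct and complete: each step is a standard application of the gluing lemma for cofibrations and the homotopy functoriality of cell attachment, and the final passage to a homotopy equivalence of pairs is justified since both inclusions of $M$ are cofibrations.

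The paper does not give a separate proof of this corollary; it simply remarks that ``the argument of the above proposition can be applied to a chain of surgeries,'' i.e.\ one repeats the cell-like collapse of the Proposition on the full trace $W$ in one stroke, collapsing all the handle disk-fibres and all the intermediate collars $M_i\times[0,1]$ simultaneously. This is exactly the alternative you sketch in your last paragraph. Your main argument instead uses the Proposition as a black box and iterates, paying for the extra bookkeeping with the gluing lemma and the transport of attaching maps across $g_{j-1}$. Both routes reach the same conclusion; yours is more explicit about why the pair structure is preserved and why the attaching maps $\phi_i$ land in the intermediate complexes $X_{i-1}$ rather than in $M$ itself, a point the paper's one-line justification leaves to the reader.
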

\subsection{Surgery obstruction} Doing surgery is a way to kill the homotopy groups of a manifold. If we can do surgery on some generators for each of the homotopy groups up to the middle dimension, then we obtain a sphere due to the Poincare Conjecture (Theorem). In the case of a map $f:M\to N$ between manifolds, if we can do the same for the kernel of the induced homomorphism for the homotopy groups, then we will obtain a homotopy equivalence. For normal maps this can be done
up to one less than the middle dimension.

We recall that a map between manifolds $f:M\to N$ is called {\em normal} if there is a vector bundle $\nu$ on $N$ such that $\tau_M\oplus f^*\nu$ is a stably trivial bundle~\cite{B},
where $\tau_M$ denotes the tangent bundle of $M$. Note that every map of a stably parallelizable manifold is normal.
Another observation is that the connected sum $f\# g:M\# M'\to N\# N'$ of two normal maps is normal.

\begin{thm}[The main theorem of surgery~\cite{W}] For every group $\pi$ there is a 4-periodic sequence of abelian groups $L_n(\pi)$ such that in dimension $n\ge 5$ for any degree one normal map $f:M\to N$ between closed manifolds there is an element
$\theta(f)\in L_n(\pi_1(N))$, called {\rm the surgery obstruction}, such that $f$
is (normally) bordant by means of a sequence of surgeries in dimensions
$\le\dim M/2$ to a (simple) homotopy equivalence if and only if
the surgery obstruction $\theta(f)\in L_n(\pi_1(N))$ is trivial. 
\end{thm}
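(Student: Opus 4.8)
The plan is to reproduce the structure of Wall's proof, organized around three phases: surgery below the middle dimension, the middle-dimensional obstruction together with the definition of $L_n(\pi)$, and the geometric realization of a vanishing obstruction. Write $n=\dim M=\dim N$ and $\pi=\pi_1(N)$, and let $\widetilde M,\widetilde N$ denote the pullbacks of the universal cover of $N$. Since $f$ has degree one, $f_*\colon H_*(\widetilde M)\to H_*(\widetilde N)$ is a split surjection of $\mathbb Z\pi$-modules (Proposition~\ref{epihom} applied over the cover), so the \emph{surgery kernels} $K_i(M)=\ker f_*$ measure the failure of $f$ to be an equivalence. For $i<\lfloor n/2\rfloor$ I would represent a generator of the relative homotopy group of the mapping cylinder of $f$ by a map $S^i\to M$; by general position ($2i<n$) it is homotopic to an embedding, and the bundle data (that $\tau_M\oplus f^*\nu$ is stably trivial) frames its normal bundle, so one may perform the $i$-surgery of Section~1.6. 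This replaces $f$ by a normally bordant map with smaller kernel, and the trace of the surgery (Corollary~\ref{he}) records the bordism. Hurewicz together with Poincar\'e duality in the kernel show that the process terminates and yields a map, still called $f$, which is $\lfloor n/2\rfloor$-connected, so that the only surviving kernel sits in the middle dimension(s).

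Next I would analyze the middle dimension and define $L_n(\pi)$. For $n=2m$, Poincar\'e duality applied to the kernel shows $K_m(M)$ is a finitely generated stably free $\mathbb Z\pi$-module, and it carries the intersection pairing $\lambda\colon K_m(M)\times K_m(M)\to\mathbb Z\pi$, which is $(-1)^m$-Hermitian with respect to the involution on $\mathbb Z\pi$ twisted by the orientation character, together with a self-intersection quadratic refinement $\mu\colon K_m(M)\to\mathbb Z\pi/\{a-(-1)^m\bar a\}$ recording the double points of immersed spheres representing a class. Because $f$ is a $\mathbb Z\pi$-homology equivalence in all other degrees, $(\lambda,\mu)$ is nonsingular. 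Define $L_{2m}(\pi)$ to be the Witt group of nonsingular $(-1)^m$-quadratic forms over $\mathbb Z\pi$, i.e.\ such forms modulo stabilization by hyperbolic (``kernel'') forms; for $n=2m+1$ do one more round of surgery to obtain a presentation of the middle kernel and extract instead a \emph{formation} — an automorphism of a hyperbolic form up to stabilization and conjugation — and let $L_{2m+1}(\pi)$ be the group of these. Only the parity of $m$, hence $n\bmod 4$, enters both definitions, which gives the asserted $4$-periodicity. Set $\theta(f)$ to be the class of $(K_m(M),\lambda,\mu)$ in the even case and the class of the formation in the odd case; using based modules and quotienting only by \emph{based} hyperbolics yields the simple groups $L_n^s(\pi)$ and the simple obstruction.

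It then remains to prove that $\theta(f)$ is well defined and that its vanishing is equivalent to completing the surgery. For well-definedness one checks that two different sequences of below-middle surgeries produce stably isomorphic forms, and — the crucial computation — that a single middle-dimensional surgery changes the form only by a hyperbolic summand while trivial stabilizing surgeries merely add hyperbolics; hence $\theta$ descends to an invariant of the normal bordism class of $f$. If $f$ is normally bordant to a homotopy equivalence $g$, then $K_*(g)=0$, so $\theta(g)=0$ and therefore $\theta(f)=0$. Conversely, if $\theta(f)=0$, then after making $f$ highly connected and stabilizing by trivial surgeries the nonsingular quadratic form on $K_m(M)$ becomes genuinely hyperbolic and so contains a Lagrangian $L$ — a direct summand of half rank on which both $\lambda$ and $\mu$ vanish. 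A basis of $L$ is represented by framed, pairwise disjoint, \emph{embedded} spheres: embeddedness and disjointness are obtained by the Whitney trick, which needs $n-m\ge 3$, i.e.\ $n\ge 5$ (the borderline $n=5$ and the odd-dimensional case require Wall's refinements of the Whitney process), and the vanishing of $\mu$ kills the normal-bundle obstruction so that these spheres are indeed framed. Surgery on them kills $K_m(M)$ and produces a homotopy equivalence normally bordant to $f$; tracking Whitehead torsion throughout upgrades ``homotopy equivalence'' to ``simple homotopy equivalence'' and $L_n$ to $L_n^s$.

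The main obstacle is precisely this middle dimension, in both directions. Showing that $\theta$ is a bordism invariant amounts to controlling the exact effect of a middle-dimensional surgery on the quadratic form, and the sufficiency direction requires that an \emph{algebraic} Lagrangian be \emph{geometrically realized} by embedded framed spheres — this is where the Whitney trick, the dimension bound $n\ge 5$, and the quadratic refinement $\mu$ (rather than the symmetric form $\lambda$ alone) are indispensable. Everything below the middle dimension is, by contrast, routine general position.
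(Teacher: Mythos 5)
The paper offers no proof of this statement at all---it is quoted as a black-box result from Wall's book~\cite{W}---so there is no internal argument to compare against, and your outline is an accurate summary of Wall's own proof: surgery below the middle dimension using general position and the normal bundle data, the definition of $L_n(\pi)$ via $(-1)^m$-quadratic forms (even case) and formations/automorphisms of hyperbolic forms (odd case) with the resulting $4$-periodicity, bordism invariance of the obstruction, and geometric realization of a Lagrangian by framed embedded spheres via the Whitney trick, which is where $n\ge 5$ and the quadratic refinement $\mu$ enter. At the level of a sketch of a book-length argument this is correct and locates the genuinely hard steps in the right places, so it is consistent with (indeed reproduces) the approach of the source the paper cites.
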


Let $\bar f:\bar M\to\bar N$ be  the normal degree one map $f:M\to N$ where the manifolds are taken with the opposite orientations. Then $\theta(\bar f)=-\theta(f)$. Also, we note that $\theta(f\#g)=\theta(f)+\theta(g)$ for any two normal maps of degree one.
For both facts, we refer
to the definition of the $L$-group as it presented in Chapter 9 of \cite{W}.

\section{Main Result}
\begin{lemm}\label{id}
If there is a degree one map $f:M\to N$ between smooth $n$-manifolds, then there is a degree one map $g:M'\to N$ which is a bijection $g^{-1}|_B:B\to g^{-1}(B)$ over an open $n$-ball $B\subset N$ such $\cat {M'}=\cat M$. 
\end{lemm}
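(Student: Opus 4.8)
The plan is to modify $f$ over a ball in $N$ by a connected-sum construction, using the connected sum formula (Theorem~\ref{sum}) to control the category. First I would pick an open $n$-ball $B\subset N$ and examine the preimage $f^{-1}(B)$. After a small homotopy of $f$ (not changing the degree or the homotopy type of $M$), I may assume $f$ is smooth and transverse to the center point $y_0\in B$, so that $f^{-1}(y_0)$ is a finite set of points $x_1,\dots,x_m$ at which $f$ is a local diffeomorphism. Because the degree is one, the signed count of these points is $\pm1$; after possibly reorienting we arrange the signs so that there are points that ``cancel'' in pairs plus one left over, and by shrinking $B$ we may assume $f^{-1}(B)$ is a disjoint union of $m$ open balls $B_1,\dots,B_m$, with $f|_{B_i}:B_i\to B$ a diffeomorphism for each $i$ (the sign being $+1$ on some, $-1$ on others).

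Next I would use these preimage balls to recognize $M$ as a connected sum. The key observation is that if $B_1$ maps by $+1$ and $B_2$ maps by $-1$ onto $B$, then the piece of $M$ obtained by joining $B_1$ and $B_2$ by a tube inside $M$ contributes a summand on which $f$ has degree zero; more precisely, I claim $M$ is diffeomorphic to $M'\# P$ where $P$ is a closed manifold, $M'$ has exactly one preimage ball over $B$ (so $g:=f|_{M'}$ composed with the collapse, suitably arranged, is a bijection over $B$), and the collapse $M'\#P\to M'$ is compatible with $f$ up to homotopy. To make this precise I would proceed by induction on $m$: if $m=1$ there is nothing to do; if $m\ge 2$, connect two oppositely-oriented preimage balls (these exist since the total degree is $1$, hence $m\ge 2$ forces a sign cancellation unless all signs agree, a case handled by instead connecting two like-oriented balls and absorbing the extra degree into a sphere summand) by an arc in $M$ whose interior misses $f^{-1}(B)$, take a tubular neighborhood of that arc, and perform the standard identification exhibiting $M$ as a connected sum $M_1\# S^n = M_1$ after removing a redundant ball-pair, reducing $m$ by one without changing the diffeomorphism type of $M$. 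Iterating, I reach a manifold $M'$ diffeomorphic to $M$ with a single preimage ball over (a possibly smaller) $B$.

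Then I would record the effect on category. At each reduction step the manifold does not actually change (we are only re-describing the same $M$), so $\cat{M'}=\cat M$ trivially; if instead the cleanest argument produces $M'$ with $M\cong M'\#(\text{spheres})$, then $\cat{M'}=\cat M$ follows immediately from Theorem~\ref{sum}, since $\cat{S^n}=1\le\cat M$ whenever $M$ is not contractible, and the degenerate case $\cat M=0$ is trivial. Finally I would check that the resulting map $g:M'\to N$ still has degree one: removing or re-routing preimage balls of total signed count $0$ leaves the degree unchanged, and the single surviving ball carries the degree $+1$. By construction $g$ restricts to a diffeomorphism $g^{-1}(B)=B'\to B$ on that one ball, so $g^{-1}|_B:B\to g^{-1}(B)$ is a bijection as required.

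The main obstacle I expect is the bookkeeping in the connected-sum reduction: verifying that tubing together two preimage balls genuinely exhibits $M$ as a connected sum (one must check the tube can be chosen with interior disjoint from $f^{-1}(B)$, which uses that $f^{-1}(B)$ is a proper subset and $M$ is connected), and handling the parity/orientation cases so that the signed preimage count is respected at every stage. Controlling the homotopy class of $g$ — ensuring $g$ is still well-defined and degree one after all the cutting and re-gluing, rather than merely some map $M'\to N$ — is the delicate point; everything about category is then a one-line consequence of Theorem~\ref{sum}.
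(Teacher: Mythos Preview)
Your reduction never actually modifies the map. The claim that $M\cong M'\# P$ with $M'$ having a single preimage ball is unjustified: there is no reason a degree-one map with several preimage balls should force a nontrivial connected-sum decomposition of its domain. The later, more modest version---viewing a ball neighborhood of the arc as exhibiting $M\cong M_1\# S^n=M_1$---is a tautology that leaves both $M$ and $f$ untouched, so after ``iterating'' you still have the same $f$ with the same $m$ preimage balls over $B$. What you presumably intend is to \emph{homotope} $f$, supported near the arc, so that two oppositely-signed preimages cancel. That is a legitimate strategy, but it is not what you wrote, and it carries an obstruction you do not mention: such a cancelling homotopy exists only when the image of the connecting arc is null-homotopic in $N$. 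For degree-one maps this can in fact always be arranged, since $f_*:\pi_1(M)\to\pi_1(N)$ is surjective and one may pre-compose the arc with a suitable loop in $M$ to kill the image class; but this step must be supplied, and the connected-sum language obscures rather than captures it.

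The paper avoids all of this by a genuinely different construction: rather than homotoping $f$, it performs surgery on $M$. For each pair $(+D_j,-D_j)$ of oppositely-signed preimage disks it deletes their interiors and glues in a tube $S^{n-1}\times[0,1]$, extending the map over the tube as the projection to $\partial D\subset N$. This yields $M'=M\# k(S^{n-1}\times S^1)$ together with a degree-one map $g:M'\to N$ that is honestly bijective over the ball, and the connected sum formula then gives $\cat{M'}=\max\{\cat M,\cat{S^{n-1}\times S^1}\}=\max\{\cat M,2\}=\cat M$ once $\cat M\ge 2$; the case $\cat M=1$ (so $M$ is a sphere and $f$ a homotopy equivalence) is handled separately. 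The paper thus trades $M'\ne M$ for an explicit construction requiring no homotopy of $f$ and no $\pi_1$ argument.
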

\begin{proof}
First suppose $\cat M=1$ so that $M$ must be a sphere. Since $f$ is of degree $1$, $f_\ast$ is an epimorphism on integral homology by Proposition \ref{epihom}. Since $f_\ast$ is an isomorphism at $\dim M$ by assumption and since the homology groups of $M$ are trivial below $\dim M$, it follows that $f_\ast$ must be an isomorphism on integral homology. Therefore, $f$ must be a homotopy equivalence by Whitehead's Theorem so that $g$ can simply be defined to be $Id_N:N \to N$.

Now suppose $\cat M \geq 2$. We may assume that $f$ is a smooth map. Let $y\in N$ be a regular value, i.e.,
there is a closed neighborhood $D$ of $y$ homeomorphic to a ball such that $f^{-1}(D)=\coprod_{i=1}^m D_i$ and the restriction of $f$ to each $D_i$ is a homeomorphism onto $D$. Since the degree of $f$ is one, the sum of local degrees $\Sigma_i \deg(f|_{D_i})$ is one. Hence, $m=2k+1$. Assume that the local degree of $f$ at $D_m$ is one. Then the balls $D_i$ can be arranged into pairs $\pm D_j$, $j=1,\dots,k$ in a way that the local degree at $+D_j$ is 1 and the local degree at $-D_j$ is -1. For each such pair we remove $int(+D_j)\cup int(-D_j)$
from $M$ and attach a 1-handle $H_j=S^{n-1}\times[0,1]$ instead. The map $f$ can be extended to $H_j$ as the projection onto the first factor where $S^{n-1}$ is identified with $\partial D$. In this way we construct a manifold $M'$ and a degree one map $g:M'\to N$ such that $g$ is bijective over $D$. Since
$M'=M\#k(S^{n-1}\times S^1)$, we obtain
$\cat{M'}=\max\{\cat M,\cat{S^{n-1}\times S^1}\}=\max\{\cat M, 2\}=\cat M$ by the connected sum formula (Theorem~\ref{sum}).
\end{proof}

We note that the same works for PL-manifolds by approximating $f$ by a noncollapsing simplicial map.

\begin{lemm}\label{mainlemma}
Let $f:M \to N$ be a normal map of degree one between closed orientable $n$-manifolds, $n\ge 5$, with $N$ being $(r-1)$-connected for some $r \geq 1$. If $f$ has zero surgery obstruction and $N$ satisfies the inequality $\dim N \leq 2r \cat N - 3$, 
then {\rm $\cat M \geq \cat N$}.
\end{lemm}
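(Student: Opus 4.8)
The plan is to argue by contradiction, combining Schwarz's characterization of $\cat$ through the Ganea fibrations with the geometry produced by surgery. Write $c=\cat N$ and suppose $\cat M\le c-1$; I will then construct a section of the Ganea fibration $p^N_{c-1}\colon G_{c-1}(N)\to N$, which by Schwarz's theorem forces $\cat N\le c-1$, contradicting $c=\cat N$, and hence proves $\cat M\ge\cat N$. To link $M$ and $N$ I would invoke the main theorem of surgery: since $n\ge 5$ and $\theta(f)=0$, the map $f$ is normally bordant, by a chain of surgeries on $M$ of dimension $\le n/2$, to a homotopy equivalence $f''\colon M''\to N$. The trace of that chain is a bordism $W$ carrying a map $F\colon W\to N$ with $F|_M=f$ and $F|_{M''}=f''$, and by Corollary~\ref{he} the pair $(W,M)$ is homotopy equivalent rel $M$ to $(M\cup_{\phi_1}D^{n_1}\cup\dots\cup_{\phi_k}D^{n_k},\,M)$ with every cell dimension satisfying $n_i\le\lfloor n/2\rfloor+1$.

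Next I would lift $F$ through $p^N_{c-1}$, starting on $M$ and extending over the cells of $W$ one at a time. Over $M$ the lift is $G_{c-1}(f)\circ s$, where $s\colon M\to G_{c-1}(M)$ is the section supplied by $\cat M\le c-1$: indeed, by naturality of the Ganea construction, $p^N_{c-1}\circ G_{c-1}(f)\circ s=f\circ p^M_{c-1}\circ s=f=F|_M$. The fiber of $p^N_{c-1}$ is the $c$-fold join $\ast^{c}\Omega N$, and since $N$ is $(r-1)$-connected, $\Omega N$ is $(r-2)$-connected, so by the connectivity estimate for joins $\ast^{c}\Omega N$ is $(cr-2)$-connected. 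Over each cell $D^{n_i}$ the pulled-back fibration is trivial, so the obstruction to extending a lift already defined on $\partial D^{n_i}=S^{n_i-1}$ lies in $\pi_{n_i-1}(\ast^{c}\Omega N)$, which vanishes once $n_i-1\le cr-2$. Now the hypothesis $\dim N\le 2rc-3$ gives $\lfloor n/2\rfloor\le rc-2$ — here one separates the cases $n$ even and $n$ odd, and this is the only place the full strength of the bound is used — hence $n_i-1\le\lfloor n/2\rfloor\le rc-2$ for all $i$. All obstructions therefore vanish, and the lift extends to $\tilde F\colon W\to G_{c-1}(N)$ with $p^N_{c-1}\circ\tilde F=F$.

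Finally, restricting $\tilde F$ to the boundary component $M''\subset W$ gives a map $\ell=\tilde F|_{M''}\colon M''\to G_{c-1}(N)$ with $p^N_{c-1}\circ\ell=f''$; composing with a homotopy inverse $g\colon N\to M''$ of $f''$ yields $\ell\circ g\colon N\to G_{c-1}(N)$ covering $f''\circ g\simeq\mathrm{id}_N$, and lifting a homotopy from $f''g$ to $\mathrm{id}_N$ along the fibration $p^N_{c-1}$ converts $\ell\circ g$ into an honest section. By Schwarz's theorem $\cat N\le c-1$, the desired contradiction. I expect the cell-by-cell extension of the lift to be the crux, and it is the only step that uses the dimensional hypothesis: one needs simultaneously that $W$ is built from $M$ using only cells of dimension $\le\lfloor n/2\rfloor+1$ (exactly surgery below the middle dimension, available because $\theta(f)=0$) and that the $(r-1)$-connectivity of $N$ makes the fiber $\ast^{c}\Omega N$ as highly connected as $(cr-2)$, so that such cells cannot obstruct; the one-unit gain over Rudyak's $\dim N\le 2r\cat N-4$ is precisely the parity bookkeeping in $\lfloor n/2\rfloor+1\le cr-1$. (A small loose end to record is that $\cat N\ge 1$, which holds since a closed manifold is never contractible, so $c-1\ge 0$ and $G_{c-1}$ is defined.)
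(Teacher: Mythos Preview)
Your argument is correct and follows essentially the same route as the paper: assume $\cat M\le\cat N-1$, use the vanishing surgery obstruction to obtain a bordism $W$ (built from $M$ by cells of dimension $\le\lfloor n/2\rfloor+1$) to a homotopy equivalence, extend a lift of $f$ through $p^N_{c-1}$ over these cells using the $(rc-2)$-connectivity of the fiber $\ast^{c}\Omega N$ together with the parity analysis of $n\le 2rc-3$, and then pull back along the homotopy inverse to contradict $\cat N=c$. The only cosmetic difference is that the paper produces the initial lift on $M$ by invoking $\cat f\le\cat M$, whereas you write it explicitly as $G_{c-1}(f)\circ s$ via naturality of the Ganea construction; these are equivalent.
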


\begin{proof} 
By way of contradiction assume that $\cat M \le q < \cat N=q+1$. 
Since $\cat f \le q$, there is a lift $\lambda:M\to G_q(N)$ such that the diagram
\[
\begin{tikzcd}
& G_{q}(N) \arrow[d, "p_{q}^N"]
\\ M \arrow[r, swap, "f"] \arrow[ur, "\lambda"]
& N
\end{tikzcd}
\]
commutes.
Since $f$ has zero surgery obstruction, there is some normal bordism $F:W \to N$ of $f$ to some homotopy equivalence $f':M' \to N$ where the manifold $W$ is the trace of surgeries in dimensions $\le\frac{\dim M}{2}$.
This gives us the following homotopy commutative diagram:
\[
\begin{tikzcd}
M \arrow[r,"\lambda"] \arrow[d, hook] & G_{q}(N) \arrow[d, "p_{q}^N"]
\\ W \arrow[r, swap, "F"] \arrow[ur, dashed, "L"]
& N
\end{tikzcd}
\]

Our goal is to find a lift $L$ of $F$, labeled above, that extends $\lambda$ to $W$.
In view of Corollary~\ref{he}, there is a homotopy equivalence of pairs
$$h:(M \cup_{\varphi_1}D^{n_1} \cup_{\varphi_2} ... \cup_{\varphi_k}D^{n_k},M)\to (W,M)$$
with  each $n_i \leq \frac{\dim M}{2} + 1$ when $\dim M$ is even and
with $n_i\leq\frac{\dim M-1}{2}+1$ when $\dim M$ is odd. So,  it suffices to solve the lifting problem 
\[
\begin{tikzcd}
W_{i-1} \arrow[r,"L_{i-1}"] \arrow[d, hook] & G_{q}(N) \arrow[d, "p_{q}^N"]
\\ W_i \arrow[r, swap, "F_i"] \arrow[ur, dashed, "L_i"]
& N
\end{tikzcd}
\]
for each $i>0$ where $W_i=M \cup_{\varphi_1}D^{n_1} \cup_{\varphi_2} ... \cup_{\varphi_i}D^{n_i}$, $W_0=M$, $L_0=\lambda$, and $F_i$ is the restriction of $F\circ h$ to $W_i$. 

We recall that the fiber of $p_{q}^N$ is $\ast^{q+1} \Omega(N)$. Since $\Omega N$ is $(r-2)$-connected, the fiber is
$(r(q+1)-2)$-connected. Thus,  $L_{i-1}$ can be extended to $D^{n_i}$ if $n_i - 1 \leq r(q+1) - 2$. Suppose that $n$ is even. Then $n$ and $2r(q+1)-3$ differ by at least $1$ so that our assumption strengthens to $n \leq 2r(q+1)-4$ and the lift $L_i$ exists since $n_i - 1 \leq \frac{n}{2}$. If $n$ is odd, then $n-1 \leq 2(r(q+1)-2)$ by the assumption and again the lift $L_i$ still exists. 

Thus, lift $L$ exists.
Now since $f'$ is a homotopy equivalence, it has some homotopy inverse $g'$ so that $L \circ g'$ is a homotopy section of $p^N_{q}$, but this contradicts the inequality $\cat N > q$.  Hence, $\cat M \geq \cat N$.
\end{proof}

\begin{thm}\label{thm}
Let $f:M \to N$ be a normal map of degree one between closed orientable smooth manifolds with $N$ being $(r-1)$-connected for some $r \geq 1$ and with $\dim N \leq 2r \cat N - 3$.
Then $\cat M \geq \cat N$.
\end{thm}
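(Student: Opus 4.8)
The plan is to reduce Theorem~\ref{thm} to Lemma~\ref{mainlemma} by disposing of the hypotheses that Lemma~\ref{mainlemma} adds beyond the theorem: namely the dimension bound $n\ge 5$, the vanishing of the surgery obstruction, and (implicitly, via the orientability assumption and the use of the connected sum formula) any orientation technicalities. The low-dimensional cases $n\le 4$ are handled separately: for $n\le 4$ Rudyak's conjecture is already known (cited as \cite{Ru2} in the introduction), so we may assume $n=\dim N=\dim M\ge 5$. If $\cat N\le 1$ then $\cat M\ge\cat N$ is automatic (the inequality is trivial for $\cat N=0$, and for $\cat N=1$ one notes $N=S^n$ and $\cat M\ge 1$ since $M$ is a nonempty closed manifold), so we may also assume $\cat N\ge 2$, which together with the hypothesis $\dim N\le 2r\cat N-3$ is the regime where the argument has content.

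Next I would remove the zero-surgery-obstruction hypothesis. Given the normal degree one map $f:M\to N$ with surgery obstruction $\theta(f)\in L_n(\pi_1(N))$, form the connected sum $g=f\#\bar f:M\#\bar M\to N\#\bar N$. Here $\bar f$ is $f$ taken with reversed orientations, so that $\theta(\bar f)=-\theta(f)$ and hence $\theta(g)=\theta(f)+\theta(\bar f)=0$ by the additivity and sign facts recorded just before Section~3; moreover the connected sum of normal maps is normal, and $g$ still has degree one. We have $N\#\bar N$ is again $(r-1)$-connected (connected sum of two $(r-1)$-connected $n$-manifolds is $(r-1)$-connected for $r\ge 1$ since the wedge-like bottom cells are shared), and $\cat(N\#\bar N)=\max\{\cat N,\cat\bar N\}=\cat N$ by Theorem~\ref{sum}, so the dimension inequality $\dim(N\#\bar N)\le 2r\cat(N\#\bar N)-3$ persists. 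Applying Lemma~\ref{mainlemma} to $g$ yields $\cat(M\#\bar M)\ge\cat(N\#\bar N)=\cat N$. But $\cat(M\#\bar M)=\max\{\cat M,\cat\bar M\}=\cat M$ again by Theorem~\ref{sum}, so $\cat M\ge\cat N$, as desired.

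Two small points need attention in writing this up. First, Lemma~\ref{mainlemma} is stated for orientable manifolds, and the theorem as printed also says ``orientable,'' so there is nothing to remove there; if one wanted the non-orientable case one would have to pass to orientation double covers, but the theorem statement does not demand it, so I would simply carry the orientability hypothesis through. Second, one must make sure the doubling trick does not accidentally trivialize the problem — it does not, because $\cat$ of a connected sum is the \emph{max}, not a sum, so no information about $\cat M$ is lost or gained spuriously. The main obstacle, such as it is, is purely bookkeeping: verifying that connectivity, the category, and the dimension inequality are all preserved under $N\rightsquigarrow N\#\bar N$ and under $M\rightsquigarrow M\#\bar M$, and checking the additivity $\theta(f\#g)=\theta(f)+\theta(g)$ together with $\theta(\bar f)=-\theta(f)$ is being invoked legitimately — all of which are supplied by Theorem~\ref{sum} and the remarks following the main theorem of surgery in the Preliminaries. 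So the proof is essentially: handle $n\le 4$ by the known low-dimensional case, handle $\cat N\le 1$ trivially, and for $n\ge 5$, $\cat N\ge 2$ apply Lemma~\ref{mainlemma} to $f\#\bar f$ and use the connected sum formula twice.
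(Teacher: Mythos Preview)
Your proposal is correct and follows essentially the same route as the paper: dispose of $n\le 4$ by the known low-dimensional result, then apply Lemma~\ref{mainlemma} to $f\#\bar f$ (which has vanishing surgery obstruction by additivity and the sign rule) and conclude via the connected sum formula. The one technical step you omit is that forming the connected sum of \emph{maps} $f\#\bar f$ requires $f$ to be a diffeomorphism over some disc in $N$; the paper first invokes Lemma~\ref{id} to replace $(M,f)$ by $(M',g)$ with $M'=M\# k(S^{n-1}\times S^1)$ (same category, by Theorem~\ref{sum}) and $g$ bijective over an open ball, so that the docking for the connected sum is actually available.
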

\begin{proof}
Since Rudyak's conjecture is known for 4-manifolds~\cite{Ru2}, we may assume that $\dim N >4$.
Let $\bar f:\bar M\to\bar N$ be as defined in subsection 2.7. Then we have $\theta(\bar f)=-\theta(f)$. By Lemma~\ref{id} we may assume that there is an open ball $B\subset N$ such that $f^{-1}$ is bijective over $B$.
Using $B$ and $f^{-1}(B)$ for the docking in the connected sum construction we consider 
the connected sum of maps $f\#\bar f:M\#\bar M\to N\#\bar N$.
Since the connected sum of normal maps is a normal map and the surgery is additive (see subsection 2.7), we obtain
$\theta(f\#\bar f)=0$. By Lemma~\ref{mainlemma}, $\cat{M\#\bar M}\ge\cat{N\#\bar N} $.
By the connected sum formula (Theorem~\ref{sum}), $\cat M\ge\cat N$.
\end{proof}

\begin{cor}
Let $f:M \to N$ be a normal map between closed orientable $n$-manifolds.  Then for any essential manifold $V$ of dimension $k\ge n-1$,
$$\cat{M\times V} \geq \cat {N\times V}.$$
\end{cor}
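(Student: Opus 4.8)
The plan is to deduce the corollary from Theorem~\ref{thm} by exhibiting a normal degree one map $f\times \mathrm{id}_V : M\times V \to N\times V$ and verifying that the product $N\times V$ satisfies the hypotheses of the theorem. First I would check that $f\times\mathrm{id}_V$ has degree one: this is immediate since the fundamental class of a product is the cross product of fundamental classes, and $(f\times\mathrm{id}_V)_*([M]\times[V]) = f_*[M]\times[V] = [N]\times[V]$. Next I would verify normality. If $\tau_M\oplus f^*\nu$ is stably trivial for some bundle $\nu$ on $N$, then using $\tau_{M\times V}=\tau_M\times\tau_V$ and $(f\times\mathrm{id}_V)^*(\nu\times\mathrm{something})$, one takes the bundle $\nu\boxtimes \tau_V^{-1}$ (a stable inverse of $\tau_V$ exists since $V$ is smooth closed) or more simply $\nu\times 0$ together with the stably trivial $\tau_V\oplus(-\tau_V)$ to conclude $\tau_{M\times V}\oplus (f\times\mathrm{id}_V)^*(\text{pullback bundle})$ is stably trivial; this is the standard fact that a product of a normal map with the identity of a closed smooth manifold is normal.

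The substantive point is to check the connectivity and dimension inequality for $N\times V$. Set $\dim N = n$ and $\dim V = k\ge n-1$. Since $N$ is a closed $n$-manifold it is at least $0$-connected, so with $r=1$ the hypothesis ``$N$ is $(r-1)$-connected'' is automatic; thus $N\times V$ is $0$-connected and we work with $r=1$. The required inequality becomes $\dim(N\times V) \le 2\cat{N\times V} - 3$, i.e. $n+k \le 2\cat{N\times V}-3$. Here I would invoke the essentiality of $V$: since $V$ is an essential $k$-manifold, $\smile$-length$(V)=k$ and $\cat V = k$, and more importantly the subadditivity/superadditivity interplay gives $\cat{N\times V}\ge \smile\text{-length}(N\times V)\ge \smile\text{-length}(V)=k$, but that is not quite strong enough. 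Instead I would use that the Berstein--Schwarz class argument shows $\cat{N\times V}\ge k + 1$ whenever $N$ has positive cup length with some nontrivial cohomology class, or — cleaner — use the general lower bound $\cat{N\times V}\ge \cat V + (\text{something})$. The cleanest route: $N\times V$ contains the essential factor, and in fact for a closed $n$-manifold $N$, $\cat N\ge 1$ (as $N\ne \mathrm{pt}$ once $n\ge 1$), and category is ``detected'' so that $\cat{N\times V}\ge \cat V + 1 = k+1$ — this follows because the product of the $V$-fundamental-class detecting classes with any nonzero class pulled back from $N$ (e.g. a generator of $H^n(N)$, available since $N$ is a closed oriented $n$-manifold) has nonzero cup product, giving $\smile$-length$(N\times V)\ge k+1$. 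Then $2\cat{N\times V}-3 \ge 2(k+1)-3 = 2k-1 = k + (k-1) \ge k + (n-1) \ge k+n = \dim(N\times V)$, using $k\ge n-1$ in the last step and $k - 1 \ge n - 1$; wait, I need $2k - 1 \ge n + k$, i.e. $k \ge n+1$, which does not follow from $k\ge n-1$. So the bound $\cat{N\times V}\ge k+1$ is insufficient and I must sharpen it.

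The real input must be $\cat{N\times V}\ge \cat V + \cat N = k + \cat N$ together with $\cat N\ge \lceil (n+1)/2\rceil$ or similar — actually the honest statement is that for $N$ a closed orientable $n$-manifold, one has $\smile$-length$(N)\ge 1$ (a fundamental cohomology class times... no). Let me instead argue as follows, which I believe is the intended path: because $V$ is essential with $\cat V = k = \dim V$, the product formula for cup-length gives $\smile\text{-length}(N\times V) \ge \smile\text{-length}(N) + \smile\text{-length}(V) = \smile\text{-length}(N) + k$, and since $N$ is a closed orientable manifold of dimension $n\ge 1$ it has at least one nonzero positive-degree cohomology class so $\smile\text{-length}(N)\ge 1$; but again this only yields $k+1$. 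The genuinely correct and sufficient estimate uses $\cat{N\times V} \ge \cat V + 1$ is too weak, so one needs the stronger product inequality involving category rather than cup length, or one applies Theorem~\ref{thm} with a value of $r$ larger than $1$ coming from $V$ being highly connected — but $V$ essential forces $\pi_1(V)\ne 1$, so $r=1$ is forced. I expect, therefore, that the main obstacle is precisely pinning down the sharp lower bound $\cat{N\times V}\ge \cat V + \cat N$ (a known but nontrivial product inequality for essential manifolds, or for manifolds satisfying cup-length $=$ category) and combining it with the elementary bound $2\cat N \ge n+ \text{(something)}$ that must itself come from $N$ admitting a degree one map and hence, via the hypotheses being vacuous, from... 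In the write-up I would state: $\cat{N\times V}\ge \cat N + \cat V = \cat N + k$, so $2\cat{N\times V} - 3 \ge 2\cat N + 2k - 3 \ge 2k - 1 \ge 2(n-1) - 1 = 2n - 3 \ge n + (n-1) \ge n + \dim$... and then reconcile the arithmetic so that $n + k \le 2\cat N + 2k - 3$ reduces to $n \le 2\cat N + k - 3$, which holds because $k \ge n - 1$ gives $2\cat N + k - 3 \ge 2\cat N + n - 4 \ge n$ precisely when $\cat N \ge 2$; and the case $\cat N \le 1$ (so $N$ a sphere) is handled separately since then $f$ is already a homotopy equivalence by the argument in Lemma~\ref{id} and the conclusion is trivial. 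This case split, plus citing the product inequality $\cat{A\times B}\ge\cat A+\cat B$ valid here because $V$ has category equal to its cohomological cup-length, is what I would carry out in full.
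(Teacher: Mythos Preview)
Your approach has the right overall shape---apply Theorem~\ref{thm} to $f\times\mathrm{id}_V$ with $r=1$---but there are two genuine gaps.

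First, the inequality $\cat{A\times B}\ge\cat A+\cat B$ that you invoke at the end is false in general; the product inequality for LS-category goes the other way. Even granting that $\cat V=\smile$-length$(V)=k$, the cup-length route only yields
\[
\cat{N\times V}\ \ge\ \smile\text{-length}(N\times V)\ \ge\ \smile\text{-length}(N)+k,
\]
not $\cat N+k$. The ingredient you are missing, and which the paper supplies, is that for a closed orientable manifold $N$ with $\cat N\ge 2$ one has $\smile$-length$(N)\ge 2$ (Poincar\'e duality, with local coefficients when $\pi_1(N)\ne 1$, pairs any nonzero intermediate-degree class with its dual). This gives $\cat{N\times V}\ge k+2$, and then the required inequality $n+k\le 2(k+2)-3=2k+1$ is exactly the hypothesis $k\ge n-1$. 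Your bound $k+1$ was indeed too weak, and the jump to $\cat N+k$ overshoots what can be justified; the paper's $k+2$ is precisely what is both provable and sufficient.

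Second, your treatment of the case $\cat N\le 1$ is incorrect. You cite the argument of Lemma~\ref{id} to conclude that $f$ is a homotopy equivalence, but that argument starts from $\cat M=1$, not $\cat N=1$. Every closed orientable $n$-manifold admits a degree one map to $S^n$, so $\cat N=1$ says nothing about $M$. The paper handles this case directly: $\cat{N\times V}\le\cat N+\cat V=1+k$, while $\cat{M\times V}\ge\smile$-length$(M\times V)\ge 1+k$ since $\smile$-length$(M)\ge 1$ for any closed orientable manifold of positive dimension.
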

\begin{proof}
We assume that $\cat N>1$. This implies that $\smile$-length$ (N)\ge 2$.
Since $V$ is essential, it follows that the cup length of $V$ equals $k$ (see subsection 2.3).
Therefore, by the cup-length estimate, $\cat {N\times V}\ge k+2 $. Then
the condition of Theorem~\ref{thm} is satisfied: 
$$\dim(N\times V)=n+k\le 2k + 1 \le 2\cat {N\times V}-3.$$

If $\cat N=1$, then $\cat{M\times V}\ge\smile$-length$(M\times V)\ge k+1=\cat N+\cat V\ge\cat{N\times V}$.
\end{proof}

\begin{cor}\label{cor2}
Let $f:M \to N$ be a degree one normal map with  $(r-1)$-connected $N$ such that $\dim N \leq 6r-3$. Then $\cat M \geq \cat N$.
\end{cor}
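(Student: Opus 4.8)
The plan is to derive Corollary~\ref{cor2} directly from Theorem~\ref{thm} by showing that the hypothesis $\dim N \le 6r - 3$ is strong enough to force the inequality $\dim N \le 2r\cat N - 3$ required by the theorem. Since Theorem~\ref{thm} applies to degree one normal maps between closed orientable manifolds with $(r-1)$-connected codomain, the only thing left to verify is the numerical bound involving $\cat N$. So the whole proof reduces to bounding $\cat N$ from below.

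First I would recall the standard connectivity estimate for LS-category: if $N$ is $(r-1)$-connected (so $\pi_i(N) = 0$ for $i < r$), then $\cat N \ge \dim N / r$, or more precisely $\cat N \ge \lceil \dim N / r \rceil$. This is a classical fact (it follows, for instance, from the dual of the cup-length/Ganea estimates, or from the fact that each Ganea fiber $\ast^{q+1}\Omega N$ is $(r(q+1)-2)$-connected, which already appears in the proof of Lemma~\ref{mainlemma}). In particular, if $\dim N \le 6r - 3$, the manifold still has a nontrivial top cohomology class, so $N$ is not contractible and $\cat N \ge 1$; but we need more.

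The key computation: combining $\cat N \ge \dim N / r$ with the hypothesis $\dim N \le 6r-3$. If $\cat N \ge 3$, then $2r\cat N - 3 \ge 6r - 3 \ge \dim N$, which is exactly the hypothesis of Theorem~\ref{thm}, and we are done. So the remaining cases are $\cat N \le 2$. If $\cat N = 2$, then the connectivity estimate gives $\dim N \le r\cat N = 2r$, hence $\dim N \le 2r \le 4r - 3 = 2r\cat N - 3$ as soon as $r \ge 2$ (wait -- one must check small $r$); and if $\cat N = 1$ then $N$ is a homotopy sphere, $f$ is a homotopy equivalence by the argument in Lemma~\ref{id}, and $\cat M \ge \cat N$ trivially. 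I would handle the low-category cases separately and carefully, using the sphere case of Lemma~\ref{id} for $\cat N = 1$ and the connectivity bound $\dim N \le r\cat N$ for $\cat N = 2$.

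The main obstacle I anticipate is the bookkeeping at the boundary of these cases -- in particular making sure the connectivity estimate $\cat N \ge \lceil \dim N/r\rceil$ is applied in a form that genuinely covers $\cat N = 2$ with small $r$ (the case $r=1$ is where $6r-3 = 3$ and $2r\cat N - 3 = 1$, so one must verify a $3$-manifold with $\cat N = 2$ actually cannot occur, or invoke the known low-dimensional cases of Rudyak's conjecture cited in the introduction). Once the case analysis on $\cat N \in \{1, 2\}$ versus $\cat N \ge 3$ is organized, each branch is a one-line inequality or a direct appeal to Theorem~\ref{thm} or to the dimension $\le 4$ result, so there is no hard analysis -- only care to ensure the three branches are exhaustive and that the borderline dimensions are covered by the already-established results.
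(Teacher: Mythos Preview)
Your case $\cat N \ge 3$ is exactly what the paper does, and it is fine. The gap is in the case $\cat N = 2$. The ``standard connectivity estimate'' you invoke is stated with the inequality reversed: for an $(r-1)$-connected CW complex one has $\cat N \le \lceil \dim N / r\rceil$, not $\ge$. (The Ganea-fiber connectivity you cite shows precisely that $p^N_q$ admits a section once $\dim N \le r(q+1)-1$, which yields an \emph{upper} bound for $\cat N$.) So the step ``$\cat N = 2$ gives $\dim N \le 2r$'' is unfounded, and in fact the reduction to Theorem~\ref{thm} cannot work in this case at all: take $N = S^2\times S^4$ with $r=2$. Then $N$ is simply connected, $\cat N = 2$, and $\dim N = 6 \le 9 = 6r-3$, so the hypothesis of the corollary holds; but $2r\cat N - 3 = 5 < 6 = \dim N$, so Theorem~\ref{thm} does not apply. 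Hence no amount of bookkeeping will push $\cat N = 2$ through the theorem.

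The paper sidesteps this by arguing on $\cat M$ rather than $\cat N$: if $\cat M = 1$ then $M$ is a homotopy sphere and (as in the first paragraph of Lemma~\ref{id}) the degree one map $f$ is a homotopy equivalence, forcing $\cat N = 1$. Contrapositively, $\cat N \ge 2$ already gives $\cat M \ge 2 \ge \cat N$. You actually have this argument in hand --- you quote it for $\cat N = 1$ --- but it is the tool for $\cat N = 2$, where it is genuinely needed; the case $\cat N = 1$ is of course trivial since $\cat M \ge 1$ always. Once $\cat N \le 2$ is disposed of this way, only $\cat N \ge 3$ remains and your (correct) inequality $6r-3 \le 2r\cat N - 3$ finishes via Theorem~\ref{thm}.
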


\begin{proof}
If $\cat M = 1$, then $f$ is a homotopy equivalence (see the proof of Lemma \ref{id}) so that $\cat N = 2$ implies $\cat M \geq 2$. Thus, we may assume $\cat N \geq 3$. Then
\[ \dim N
\leq 6r-3
= 2r(3) - 3
\leq 2r \cat N - 3\]
so that $\cat M \geq \cat N$ by Theorem \ref{thm}.
\end{proof}
We note that the conditions of Corollary~\ref{cor2} are satisfied when the dimension of manifolds equals either $4r+k$ or $3r+k$ with $k<r$.

\begin{rmk} Since every map of a stably parallelizable manifold is normal, Theorem~\ref{thm} recovers Rudyak's Theorem. 
\end{rmk}

\

\section*{Acknowledgments}
The first author was supported by the Simons Foundation Grant.

\

\footnotesize

\end{document}